\DeclareMathAlphabet{\mathpzc}{OT1}{pzc}{m}{it}
\def\BQ{\mathbb{Q}}
\def\BZ{\mathbb{Z}}
\def\fT{\mathfrak{T}}
\def\fU{\mathfrak{U}}
\def\fX{\mathfrak{X}}
\def\fa{\mathfrak{a}}
\def\fe{\mathfrak{e}}
\def\ff{\mathfrak{f}}
\def\sC{\mathsf{C}}
\def\sT{\mathsf{T}}
\def\adots{\mathinner{\mkern1mu\raise1.0pt\vbox{\kern7.0pt\hbox{.}}\mkern2mu\raise4.0pt\hbox{.}\mkern2mu\raise7.0pt\hbox{.}\mkern1mu}}
\def\ast{{\textstyle *}}
\def\dddots{\mathinner{\mkern1mu\raise10.0pt\vbox{\kern7.0pt\hbox{.}}\mkern2mu\raise5.3pt\hbox{.}\mkern2mu\raise1.0pt\hbox{.}\mkern1mu}}
\def\dim{\operatorname{dim}}
\def\Ext{\operatorname{Ext}}
\def\ind{\operatorname{ind}}
\def\obj{\operatorname{obj}}
\def\SL2{\operatorname{SL}_2}
\renewcommand{\labelenumi}{(\roman{enumi})}
\newtheorem{Lemma}{Lemma}[section]
\newtheorem{Theorem}[Lemma]{Theorem}
\newtheorem{Proposition}[Lemma]{Proposition}
\theoremstyle{definition}
\newtheorem{Definition}[Lemma]{Definition}
\newtheorem{Construction}[Lemma]{Construction}
\newtheorem{Remark}[Lemma]{Remark}
\newtheorem{Example}[Lemma]{Example}
\begin{document}

\setlength{\parindent}{0pt}
\setlength{\parskip}{7pt}
\setlength{\baselineskip}{5.8mm}

\title[$\SL2$-tilings and triangulations of the strip]{$\SL2$-tilings and triangulations of the strip}

\author{Thorsten Holm}
\address{Institut f\"{u}r Algebra, Zahlentheorie und Diskrete
Mathematik, Fa\-kul\-t\"at f\"ur Mathematik und Physik, Leibniz
Universit\"{a}t Hannover, Welfengarten 1, 30167 Hannover, Germany}
\email{holm@math.uni-hannover.de}
\urladdr{http://www.iazd.uni-hannover.de/\~{ }tholm}

\author{Peter J\o rgensen}
\address{School of Mathematics and Statistics,
Newcastle University, Newcastle upon Tyne NE1 7RU, United Kingdom}
\email{peter.jorgensen@ncl.ac.uk}
\urladdr{http://www.staff.ncl.ac.uk/peter.jorgensen}


\keywords{Arc, cluster algebra, cluster category, Conway--Coxeter
  friese, Ptolemy formula, tiling, triangulation}

\subjclass[2010]{05E15, 13F60}

\begin{abstract} 

$\SL2$-tilings were introduced by Assem, Reutenauer, and Smith in
connection with frieses and their applications to cluster algebras.

\medskip
\noindent
An $\SL2$-tiling is a bi-infinite matrix of positive integers such
that each adjacent $2 \times 2$--submatrix has determinant $1$.

\medskip
\noindent
We construct a large class of new $\SL2$-tilings which contains the
previously known ones.  More precisely, we show that there is a
bijection between our class of $\SL2$-tilings and certain
combinatorial objects, namely triangulations of the strip.

\end{abstract}

\maketitle

\setcounter{section}{0}
\section{Introduction}
\label{sec:introduction}

Our main result is sufficiently simple that we can begin with its
statement.

{\bf Main Theorem. }
{\em
There is a bijection between $\SL2$-tilings with enough ones and
triangulations of the strip.
}

A {\em triangulation of the strip} is a structure of the kind shown
in Figure \ref{fig:triangulation1}.
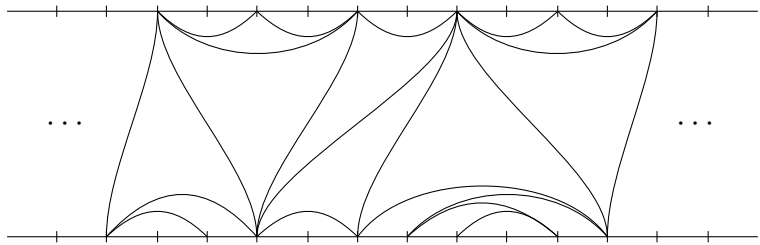
\begin{figure}
  \centering
  \begin{tikzpicture}[xscale=2.00,yscale=1.5]

    \path (-2.26,0) node{$\cdots$};
    \path (1.93,0) node{$\cdots$};
 
    \draw (-2.66,1) -- (2.33,1);
    \draw (-2.66,-1) -- (2.33,-1);

    \draw (-2.33,0.95) -- (-2.33,1.05);
    \draw (-2.00,0.95) -- (-2.00,1.05);
    \draw (-1.66,0.95) -- (-1.66,1.05);
    \draw (-1.33,0.95) -- (-1.33,1.05);
    \draw (-1.00,0.95) -- (-1.00,1.05);
    \draw (-0.66,0.95) -- (-0.66,1.05);
    \draw (-0.33,0.95) -- (-0.33,1.05);
    \draw (0.00,0.95) -- (0.00,1.05);
    \draw (0.33,0.95) -- (0.33,1.05);
    \draw (0.66,0.95) -- (0.66,1.05);
    \draw (1.00,0.95) -- (1.00,1.05);
    \draw (1.33,0.95) -- (1.33,1.05);
    \draw (1.66,0.95) -- (1.66,1.05);
    \draw (2.00,0.95) -- (2.00,1.05);

    \draw (-2.33,-0.95) -- (-2.33,-1.05);
    \draw (-2.00,-0.95) -- (-2.00,-1.05);
    \draw (-1.66,-0.95) -- (-1.66,-1.05);
    \draw (-1.33,-0.95) -- (-1.33,-1.05);
    \draw (-1.00,-0.95) -- (-1.00,-1.05);
    \draw (-0.66,-0.95) -- (-0.66,-1.05);
    \draw (-0.33,-0.95) -- (-0.33,-1.05);
    \draw (0.00,-0.95) -- (0.00,-1.05);
    \draw (0.33,-0.95) -- (0.33,-1.05);
    \draw (0.66,-0.95) -- (0.66,-1.05);
    \draw (1.00,-0.95) -- (1.00,-1.05);
    \draw (1.33,-0.95) -- (1.33,-1.05);
    \draw (1.66,-0.95) -- (1.66,-1.05);
    \draw (2.00,-0.95) -- (2.00,-1.05);

    \draw (-2.00,-1) .. controls (-2.00,-0.4) and (-1.66,0.4) .. (-1.66,1);
    \draw (-2.00,-1) .. controls (-1.66,-0.5) and (-1.33,-0.5) .. (-1.00,-1);
    \draw (-2.00,-1) .. controls (-1.76,-0.7) and (-1.56,-0.7) .. (-1.33,-1);

    \draw (-1.66,1) .. controls (-1.66,0.4) and (-1.00,-0.4) .. (-1.00,-1);
    \draw (-1.66,1) .. controls (-1.33,0.5) and (-0.66,0.5) .. (-0.33,1);
    \draw (-1.66,1) .. controls (-1.43,0.7) and (-1.23,0.7) .. (-1.00,1);
    \draw (-1.00,1) .. controls (-0.76,0.7) and (-0.56,0.7) .. (-0.33,1);

    \draw (-1.00,-1) .. controls (-1.00,-0.4) and (-0.33,0.4) .. (-0.33,1);
    \draw (-1.00,-1) .. controls (-0.76,-0.7) and (-0.56,-0.7) .. (-0.33,-1);

    \draw (0.33,1) .. controls (0.33,0.4) and (-1.00,-0.4) .. (-1.00,-1);
    \draw (-0.33,1) .. controls (-0.1,0.7) and (0.1,0.7) .. (0.33,1);

    \draw (-0.33,-1) .. controls (-0.33,-0.4) and (0.33,0.4) .. (0.33,1);
    \draw (-0.33,-1) .. controls (0.00,-0.4) and (1.00,-0.4) .. (1.33,-1);
    \draw (0.00,-1) .. controls (0.33,-0.5) and (1.00,-0.5) .. (1.33,-1);
    \draw (0.00,-1) .. controls (0.33,-0.6) and (0.66,-0.6) .. (1.00,-1);
    \draw (0.33,-1) .. controls (0.56,-0.7) and (0.76,-0.7) .. (1.00,-1);

    \draw (0.33,1) .. controls (0.33,0.4) and (1.33,-0.4) .. (1.33,-1);
    \draw (0.33,1) .. controls (0.66,0.5) and (1.33,0.5) .. (1.66,1);
    \draw (0.33,1) .. controls (0.56,0.7) and (0.76,0.7) .. (1.00,1);
    \draw (1.00,1) .. controls (1.23,0.7) and (1.43,0.7) .. (1.66,1);

    \draw (1.33,-1) .. controls (1.33,-0.4) and (1.66,0.4) .. (1.66,1);

  \end{tikzpicture} 
  \caption{A triangulation of the strip}
\label{fig:triangulation1}
\end{figure}
An {\em $SL{}_2$-tiling} is a bi-infinite matrix of positive integers
such that each adjacent $2 \times 2$--submatrix has determinant $1$,
see Figure \ref{fig:tiling1}.
\begin{figure}
  \centering
  \begin{tikzpicture}[auto]

    \matrix
    {
      &&&&&& \node{$\vdots$}; &&&&&& \\
      & \node {10}; & \node {23}; & \node {13}; & \node {3}; & \node {5}; & \node {2}; & \node {3}; & \node {7}; & \node {11}; & \node {4}; & \node {1}; \\
      & \node {23}; & \node {53}; & \node {30}; & \node {7}; & \node {12}; & \node {5}; & \node {8}; & \node {19}; & \node {30}; & \node {11}; & \node {3}; \\
      & \node {13}; & \node {30}; & \node {17}; & \node {4}; & \node {7}; & \node {3}; & \node {5}; & \node {12}; & \node {19}; & \node {7}; & \node {2}; \\
      & \node {16}; & \node {37}; & \node {21}; & \node {5}; & \node {9}; & \node {4}; & \node {7}; & \node {17}; & \node {27}; & \node {10}; & \node {3}; \\
      & \node {3}; & \node {7}; & \node {4}; & \node {1}; & \node {2}; & \node {1}; & \node {2}; & \node {5}; & \node {8}; & \node {3}; & \node {1}; \\
      \node{$\cdots$}; & \node {5}; & \node {12}; & \node {7}; & \node {2}; & \node {5}; & \node {3}; & \node {7}; & \node {18}; & \node {29}; & \node {11}; & \node {4}; &\node{$\cdots$};\\
      & \node {2}; & \node {5}; & \node {3}; & \node {1}; & \node {3}; & \node {2}; & \node {5}; & \node {13}; & \node {21}; & \node {8}; & \node {3}; \\
      & \node {5}; & \node {13}; & \node {8}; & \node {3}; & \node {10}; & \node {7}; & \node {18}; & \node {47}; & \node {76}; & \node {29}; & \node {11}; \\
      & \node {3}; & \node {8}; & \node {5}; & \node {2}; & \node {7}; & \node {5}; & \node {13}; & \node {34}; & \node {55}; & \node {21}; & \node {8}; \\
      & \node {4}; & \node {11}; & \node {7}; & \node {3}; & \node {11}; & \node {8}; & \node {21}; & \node {55}; & \node {89}; & \node {34}; & \node {13}; \\
      & \node {1}; & \node {3}; & \node {2}; & \node {1}; & \node {4}; & \node {3}; & \node {8}; & \node {21}; & \node {34}; & \node {13}; & \node {5}; \\
      &&&&&& \node{$\vdots$}; &&&&&& \\
    };

  \end{tikzpicture} 
  \caption{An $\SL2$-tiling with enough ones}
\label{fig:tiling1}
\end{figure}
An $\SL2$-tiling is said to have {\em enough ones} if each quadrant $(
<i , >j )$ and each quadrant $( >i , <j )$ contains the value $1$.
We use the notation
\begin{equation}
\label{equ:quadrant}
  ( <i , >j ) = \{\: (x,y) \in \BZ \times \BZ \;|\; x < i, \: y > j \:\}
\end{equation}
and similarly for other inequality signs.

\begin{Remark}
We follow matrix convention when writing tilings so the
$x$-coordinate increases from top to bottom and the $y$-coordinate
increases from left to right.
\end{Remark}

The triangulation in Figure \ref{fig:triangulation1} corresponds to
the $\SL2$-tiling in Figure \ref{fig:tiling1} under the bijection of
the Main Theorem.

Defining a map $\Phi$ from triangulations to tilings is in fact easy
by using the theory of Conway--Coxeter frieses as introduced in
\cite{CC1} and \cite{CC2}, but it is harder to show that it is a
bijection.

Triangulations of the strip can be viewed as infinite simplicial
complexes and are, in that sense, classic.  We first saw them
mentioned explicitly by Igusa and Todorov in \cite[sec.\ 4.3]{IT}.

$\SL2$-tilings were introduced by Assem, Reutenauer, and Smith in
\cite[sec.\ 1]{ARS}.  They were explored by Bergeron and Reutenauer in
\cite{BR} and \cite{R} and are closely related to cluster algebras,
cluster categories, frieses, and quiver mutations.  There is also a
link to the T-systems of theoretical physics, see \cite[sec.\
2.2]{DF}.

The idea to link $\SL2$-tilings and triangulations of the strip came
when we studied the cluster category introduced in \cite[exam.\
4.1.4(3)]{IT}.  We explain this briefly in Appendix \ref{app:IT}.

Note that the best previous result on existence of $\SL2$-tilings is
the following, see \cite[thm.\ 3]{ARS}.

{\bf Theorem }(Assem, Reutenauer, and Smith).
{\em 
An infinite zig-zag path of ones in the plane can be extended to an
$\SL2$-tiling. 
}

This is a special case of our Main Theorem, see Remark \ref{rmk:ARS}.

As shown by Bergeron and Reutenauer in \cite[sec.\ 1]{BR}, an
$\SL2$-tiling is {\em tame} in the sense that it has rank $2$ when
viewed as a matrix.  It was observed to us by Christophe Reutenauer
that this takes a pleasantly concrete form in the present situation.
Let $\fT$ be a triangulation of the strip with associated
$\SL2$-tiling $t = \Phi( \fT )$.  Let $C_j$ be the $j$th column of
$t$.  Then $\gamma_j C_j = C_{ j-1 } + C_{ j+1 }$ where $\gamma_j$ is
the number of `triangles' in $\fT$ which are incident with the $j$th
vertex on the upper edge of the strip.  There is an analogous formula
which links rows of $t$ to vertices on the lower edge of the strip.
Note that the vertices are numbered according to Figure
\ref{fig:crossing}.  We return to this observation in Remark
\ref{rmk:Reutenauer}.

The paper is organised as follows: Section \ref{sec:definitions} gives
rigorous definitions.  Section \ref{sec:CC} is a brief reminder on
Conway--Coxeter frieses.  Section \ref{sec:Phi} constructs the map
$\Phi$ from triangulations of the strip to $\SL2$-tilings with enough
ones.  Sections \ref{sec:properties1}--\ref{sec:properties4} show a
number of properties of $\SL2$-tilings with enough ones.  Section
\ref{sec:Psi} uses this to define a map $\Psi$ from $\SL2$-tilings
with enough ones to triangulations of the strip and shows that it is an
inverse to $\Phi$.  Appendix \ref{app:IT} explains the link to a
cluster category by Igusa and Todorov.

\section{Definitions}
\label{sec:definitions}

\begin{Definition}
\label{def:arc}
The {\em vertices of the strip} are the elements of two disjoint
copies of $\BZ$ denoted $\BZ^{ \circ } = \{\: \ldots , -1^{ \circ } ,
0^{ \circ } , 1^{ \circ } , \ldots \:\}$ and $\BZ_{ \circ } = \{\: \ldots
, -1_{ \circ } , 0_{ \circ } , 1_{ \circ } , \ldots \:\}$.

A {\em connecting arc} is an element of $\BZ^{ \circ }
\times\hspace{1pt} \BZ_{ \circ }$, and an {\em internal arc} is an
element $( p^{ \circ },q^{ \circ} ) \in \BZ^{ \circ }
\times\hspace{1pt} \BZ^{ \circ }$ or $( p_{ \circ },q_{ \circ} ) \in
\BZ_{ \circ } \times\hspace{1pt} \BZ_{ \circ }$ with $p \leq q-2$.
The word {\em arc} means connecting or internal arc.

We interpret the vertices and the arcs geometrically according to
Figure \ref{fig:crossing} which shows the connecting arc $( 2^{ \circ
} , 3_{ \circ } )$ and the internal arcs $( 1^{ \circ } , 4^{ \circ }
)$ and $( 0_{ \circ } , 2_{ \circ } )$.
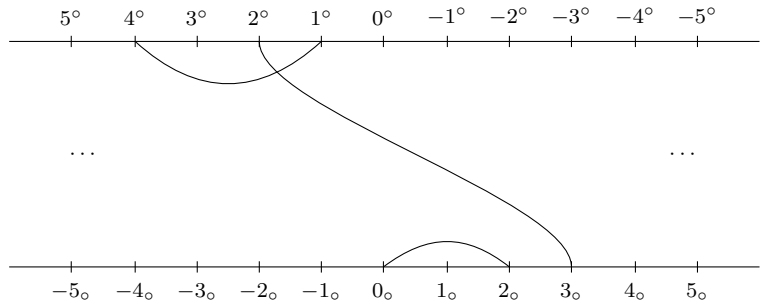
\begin{figure}
  \centering
  \begin{tikzpicture}[xscale=2.50,yscale=1.5]

    \tikzstyle{every node}=[font=\tiny]

    \path (-0.93,0) node{$\cdots$};
    \path (2.26,0) node{$\cdots$};

    \draw (-1.33,1) -- (2.66,1);
    \draw (-1.33,-1) -- (2.66,-1);

    \draw (-1.00,0.95) -- (-1.00,1.05) node[anchor=south]{$5^{ \circ }$};
    \draw (-0.66,0.95) -- (-0.66,1.05) node[anchor=south]{$4^{ \circ }$};
    \draw (-0.33,0.95) -- (-0.33,1.05) node[anchor=south]{$3^{ \circ }$};
    \draw (0.00,0.95) -- (0.00,1.05) node[anchor=south]{$2^{ \circ }$};
    \draw (0.33,0.95) -- (0.33,1.05) node[anchor=south]{$1^{ \circ }$};
    \draw (0.66,0.95) -- (0.66,1.05) node[anchor=south]{$0^{ \circ }$};
    \draw (1.00,0.95) -- (1.00,1.05) node[anchor=south]{$-1^{ \circ }$};
    \draw (1.33,0.95) -- (1.33,1.05) node[anchor=south]{$-2^{ \circ }$};
    \draw (1.66,0.95) -- (1.66,1.05) node[anchor=south]{$-3^{ \circ }$};
    \draw (2.00,0.95) -- (2.00,1.05) node[anchor=south]{$-4^{ \circ }$};
    \draw (2.33,0.95) -- (2.33,1.05) node[anchor=south]{$-5^{ \circ }$};

    \draw (-1.00,-0.95) -- (-1.00,-1.05) node[anchor=north]{$-5_{ \circ }$};
    \draw (-0.66,-0.95) -- (-0.66,-1.05) node[anchor=north]{$-4_{ \circ }$};
    \draw (-0.33,-0.95) -- (-0.33,-1.05) node[anchor=north]{$-3_{ \circ }$};
    \draw (0.00,-0.95) -- (0.00,-1.05) node[anchor=north]{$-2_{ \circ }$};
    \draw (0.33,-0.95) -- (0.33,-1.05) node[anchor=north]{$-1_{ \circ }$};
    \draw (0.66,-0.95) -- (0.66,-1.05) node[anchor=north]{$0_{ \circ }$};
    \draw (1.00,-0.95) -- (1.00,-1.05) node[anchor=north]{$1_{ \circ }$};
    \draw (1.33,-0.95) -- (1.33,-1.05) node[anchor=north]{$2_{ \circ }$};
    \draw (1.66,-0.95) -- (1.66,-1.05) node[anchor=north]{$3_{ \circ }$};
    \draw (2.00,-0.95) -- (2.00,-1.05) node[anchor=north]{$4_{ \circ }$};
    \draw (2.33,-0.95) -- (2.33,-1.05) node[anchor=north]{$5_{ \circ }$};

    \draw (1.66,-1) .. controls (1.66,-0.4) and (0.00,0.4) .. (0.00,1);
    \draw (-0.66,1) .. controls (-0.33,0.5) and (0.00,0.5) .. (0.33,1);
    \draw (0.66,-1) .. controls (0.90,-0.7) and (1.10,-0.7) .. (1.33,-1);

  \end{tikzpicture} 
  \caption{Crossing and non-crossing arcs}
\label{fig:crossing}
\end{figure}
Note that the vertices along the upper and lower edges of the strip
are numbered in opposite directions.
\end{Definition}

\begin{Remark}
Interpreting the arcs geometrically gives an obvious notion of when
two arcs {\em cross}.  For instance, the arcs $( 2^{ \circ } , 3_{
  \circ } )$ and $( 1^{ \circ } , 4^{ \circ })$ cross, but the arcs $(
2^{ \circ } , 3_{ \circ } )$ and $( 0_{ \circ } , 2_{ \circ })$ do
not, see Figure \ref{fig:crossing}.

Note that arcs which only meet at their end points do not cross; in
particular, Figure \ref{fig:triangulation1} shows a set of pairwise
non-crossing arcs.
\end{Remark}

\begin{Definition}
\label{def:triangulation}
A {\em triangulation of the strip} is a maximal collection $\fT$ of
pairwise non-crossing arcs with the property that for each $( i,j )
\in \BZ \times \BZ$ we have
\begin{align}
\label{equ:triangulation}
  & \mbox{$( p^{ \circ } , q_{ \circ } ) \in \fT$ for some $( p,q ) \in ( < i , > j )$ and} \\
\nonumber
  & \mbox{$( p^{ \circ } , q_{ \circ } ) \in \fT$ for some $( p,q ) \in ( > i , < j )$.}
\end{align}
\end{Definition}

See Figure \ref{fig:triangulation1}. 

\begin{Definition}
\label{def:tiling}
An {\em $SL_2$-tiling} $t$ is a map
\[
  \BZ \times \BZ \ni \; ( i,j )
  \; \mapsto \;
  t_{ ij } \; \in \{\: 1,2,3, \ldots \:\}
\]
such that
\begin{equation}
\label{equ:determinant1}
  \left|
    \begin{array}{cc}
      t_{ ij } & t_{ i,j+1 } \\[2.5mm]
      t_{ i+1,j } & t_{ i+1,j+1 }
    \end{array}
  \right|
  = 1
\end{equation}
for $( i,j ) \in \BZ \times \BZ$.

The tiling $t$ {\em has enough ones} if, for each $( i,j ) \in \BZ
\times \BZ$, we have 
\begin{align}
\label{equ:tiling}
  & \mbox{$t_{ pq } = 1$ for some $( p,q ) \in ( < i , > j )$ and} \\
\nonumber
  & \mbox{$t_{ pq } = 1$ for some $( p,q ) \in ( > i , < j )$.}
\end{align}
See Figure \ref{fig:tiling1}.  We will occasionally write $t( i,j )$
in place of $t_{ ij }$ to avoid nested subscripts.
\end{Definition}

\begin{Remark}
The similarity between equations \eqref{equ:triangulation} and
\eqref{equ:tiling} is no coincidence:  If $\fT$ and $t$ correspond
under the bijection of our main theorem, then $( i^{ \circ } , j_{
  \circ } ) \in \fT$ if and only if $t_{ ij } = 1$.  See 
Proposition \ref{pro:Phi}.
\end{Remark}

\begin{Example}
Not every $\SL2$-tiling has enough ones as shown by the tiling in
Figure \ref{fig:tiling2}.
\begin{figure}
  \centering
  \begin{tikzpicture}[auto]
    \matrix
    {
      &&&&&& \node{$\vdots$}; &&&&&& \\
      & \node {61}; & \node {50}; & \node {39}; & \node {28}; & \node {17}; & \node[fill=blue!20] {6}; & \node {7}; & \node {8}; & \node {9}; & \node {10}; & \node {11}; \\
      & \node {50}; & \node {41}; & \node {32}; & \node {23}; & \node {14}; & \node[fill=blue!20] {5}; & \node {6}; & \node {7}; & \node {8}; & \node {9}; & \node {10}; \\
      & \node {39}; & \node {32}; & \node {25}; & \node {18}; & \node {11}; & \node[fill=blue!20] {4}; & \node {5}; & \node {6}; & \node {7}; & \node {8}; & \node {9}; \\
      & \node {28}; & \node {23}; & \node {18}; & \node {13}; & \node {8}; & \node[fill=blue!20] {3}; & \node {4}; & \node {5}; & \node {6}; & \node {7}; & \node {8}; \\
      & \node {17}; & \node {14}; & \node {11}; & \node {8}; & \node {5}; & \node[fill=blue!20] {2}; & \node {3}; & \node {4}; & \node {5}; & \node {6}; & \node {7}; \\
      \node{$\cdots$}; & \node[fill=blue!20] {6}; & \node[fill=blue!20] {5}; & \node[fill=blue!20] {4}; & \node[fill=blue!20] {3}; & \node[fill=blue!20] {2}; & \node[fill=blue!20] {1}; & \node[fill=blue!20] {2}; & \node[fill=blue!20] {3}; & \node[fill=blue!20] {4}; & \node[fill=blue!20] {5}; & \node[fill=blue!20] {6}; &\node{$\cdots$};\\
      & \node {7}; & \node {6}; & \node {5}; & \node {4}; & \node {3}; & \node[fill=blue!20] {2}; & \node {5}; & \node {8}; & \node {11}; & \node {14}; & \node {17}; \\
      & \node {8}; & \node {7}; & \node {6}; & \node {5}; & \node {4}; & \node[fill=blue!20] {3}; & \node {8}; & \node {13}; & \node {18}; & \node {23}; & \node {28}; \\
      & \node {9}; & \node {8}; & \node {7}; & \node {6}; & \node {5}; & \node[fill=blue!20] {4}; & \node {11}; & \node {18}; & \node {25}; & \node {32}; & \node {39}; \\
      & \node {10}; & \node {9}; & \node {8}; & \node {7}; & \node {6}; & \node[fill=blue!20] {5}; & \node {14}; & \node {23}; & \node {32}; & \node {41}; & \node {50}; \\
      & \node {11}; & \node {10}; & \node {9}; & \node {8}; & \node {7}; & \node[fill=blue!20] {6}; & \node {17}; & \node {28}; & \node {39}; & \node {50}; & \node {61}; \\
      &&&&&& \node{$\vdots$}; &&&&&& \\
    };

  \end{tikzpicture} 
  \caption{An $\SL2$-tiling without enough ones. The shaded cells can be
  continued in an obvious way}
\label{fig:tiling2}
\end{figure}
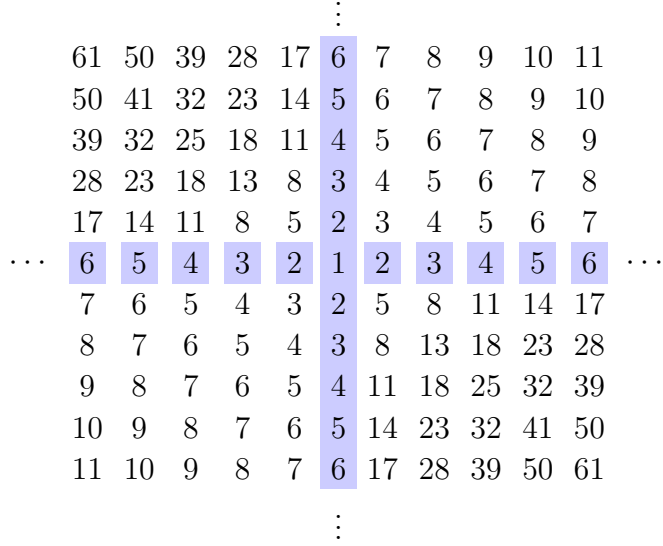
It is defined by continuing the pattern in the shaded cells in the
obvious way, then filling in the rest of the cells using Equation
\eqref{equ:determinant1}.  One shows directly that all resulting
values are positive integers and that the tiling has only a single
occurrence of $1$.
\end{Example}

\section{Reminder on Conway--Coxeter frieses}
\label{sec:CC}

Conway--Coxeter frieses were introduced by the eponymous authors.  We
refer to them henceforth as {\em frieses}.  The main source is the
companion papers \cite{CC1} and \cite{CC2}, which are not
always easy to cite as they provide only outline details.  We
sometimes cite \cite{BCI} instead.

\begin{Definition}
\label{def:CC}
A {\em partial $\SL2$-tiling defined on a subset $D \subseteq \BZ
\times \BZ$} is a map
\[
  D \ni \; ( i,j ) 
  \; \mapsto \;
  t_{ ij } \; \in \{\: 1,2,3, \ldots \:\}
\]
satisfying Equation \eqref{equ:determinant1} when it makes sense.

In particular, let $D$ be a diagonal band bounded by two parallel
lines running `northwest' to `southeast'.  A {\em friese} is a partial
$\SL2$-tiling $t$ defined on $D$ such that $t_{ ij } = 1$ for each $(
i,j )$ on the edges of $D$, see Figure \ref{fig:CC0}.  Note that the
vertical width of $D$ can be any positive integer and $D$ can be
placed anywhere in the plane.
\begin{figure}
  \centering
\[
  \xymatrix @-3.5pc @! {
      & & \dddots \\
      &
        {}\save[]-<0.125cm,0.625cm>*\txt<8pc>{$\dddots$} \restore 
          & & 1 \\
      & & & & \fX( 1,0 ) \\
      \dddots & & & & \fX( 2,0 ) & 1 \\
      & 1 & & & \vdots & & 1 \\
      & & 1 & & \vdots & & & 1 \\
      & & & 1 \; & \; \fX( n-1,0 ) & & & & \dddots \\
      & & & & \fX( n,0 ) \\
      & & & & & 1 & & {}\save[]+<0.1cm,1.1cm>*\txt<8pc>{$\dddots$} \restore \\
      & & & & & & \dddots \\
    }
\]
  \caption{A friese.  It is defined on a diagonal band $D$ and each
    entry on the edges of $D$ is $1$.  In particular $\fX( 1,0 ) = \fX(
    n,0) = 1$ } 
\label{fig:CC0}
\end{figure}
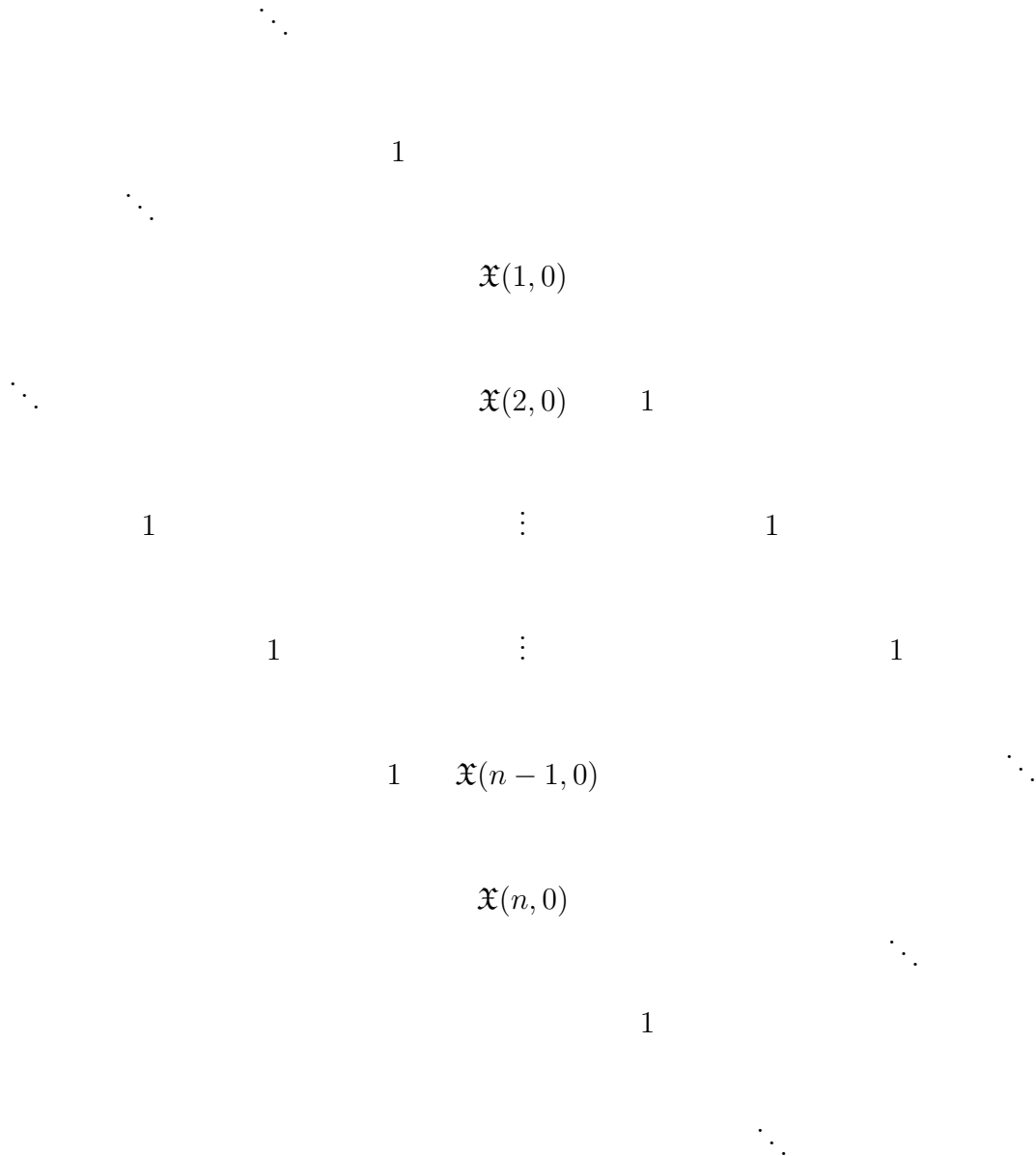
\end{Definition}

\begin{Remark}
\label{rmk:BCI}
Let $P$ be an $(n+1)$-gon with vertices $0, 1, \ldots, n$ and let $D$
be a fixed diagonal band of vertical width $n$.  Then there is a
bijective correspondence between triangulations of $P$ and frieses on
$D$.

The correspondence is realised as follows: Fix a vertical line segment
$V$ from edge to edge of $D$.  Each triangulation $\fX$ of $P$ gives
rise to a map from diagonals of $P$ to positive integers as explained
in \cite[p.\ 172]{BCI}.  In \cite{BCI} the value of the map on the
diagonal from $A$ to $B$ is denoted by $( A,B )$ but we denote it by
\[
  \fX( A,B )
\]
to emphasise its dependence on $\fX$.  The friese corresponding to
$\fX$ is defined by having the following values on $V$, see Figure
\ref{fig:CC0}.
\[
  \fX( 1,0 ) \;,\; \ldots \;,\; \fX( n,0 ).
\]
This determines the whole friese, see (10) in \cite{CC1} and
\cite{CC2}.
\end{Remark}

\begin{Definition}
A {\em fundamental region} of a friese is the restriction of the
friese to a triangle $F$ of the form shown in Figure
\ref{fig:CC2}.  Note that the fundamental region includes a diagonal
of $1$'s along the base of $F$ and a $1$ at its apex.
\begin{figure}
  \centering
\[
  \xymatrix @-2.00pc @! {
      & & & \dddots \\
      & & & & 1 \\
      & & & & & 1 \\
      & & & & & & 1 & & & & & & & \\
      \dddots & & & & & & & 1 & & & & & & \\
      & 1 & & & & & & & 1 & & & & & \\
      & & 1 & \ast & \cdots & \cdots & \cdots & \cdots & \ast & 1 & & & & \\
      & & & 1 & \dddots & & & & & \ast & 1 & & & & \\
      & & & & 1 & \dddots & & F & & \vdots & & 1 & & & \\
      & & & & & 1 & \dddots & & & \vdots & & & 1 & \\
      & & & & & & 1 & \dddots & & \vdots & & & & 1 \\
      & & & & & & & 1 & \dddots & \vdots & & & & & \dddots \\
      & & & & & & & & 1 & \ast & & & & \\
      & & & & & & & & & 1 & &  & & \\
      & & & & & & & & & & 1 & & & \\
      & & & & & & & & & & & \dddots & &
    }
\]
  \caption{A fundamental region of a friese}
\label{fig:CC2}
\end{figure}
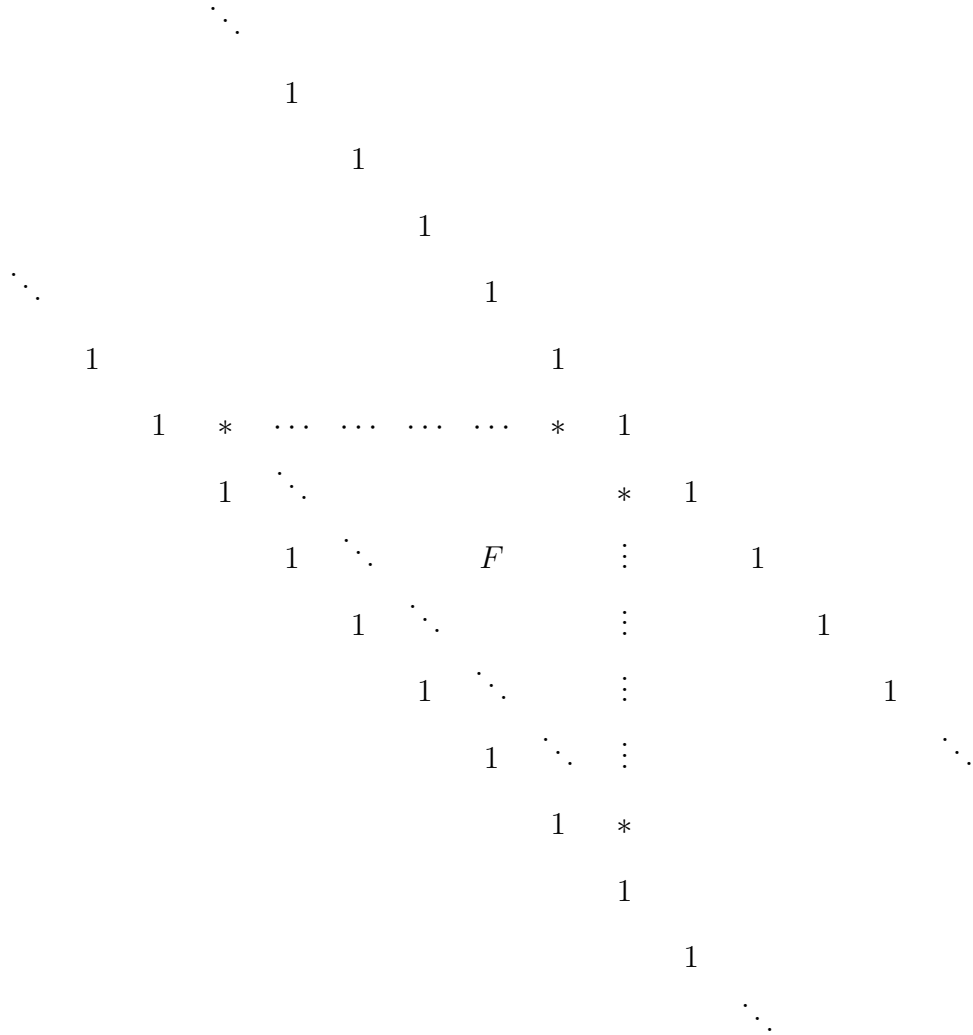
\end{Definition}

\begin{Remark}
Strictly speaking, $D$ and $F$ consist of the grid points in a band
and a triangle, but we will be lax about this to avoid verbosity.
\end{Remark}

\begin{Remark}
\label{rmk:fundamental_region}
Consider a friese defined on a diagonal band $D$.  Let $\ell$ be a
descending diagonal line down the middle of $D$.  A fundamental region
of the tiling can be reflected in $\ell$, and the fundamental region
and its reflection can be translated along $\ell$.  It was shown in
\cite[(21)]{CC2} that the friese is covered by such translations as
shown in Figure \ref{fig:CC4}.
\begin{figure}
  \centering
\[
  \xymatrix @-3.25pc @! {
      & & *{} \\
      & & & \\
      \dddots & & & & \\
      & & & & & \\
      & & & & & & \\
      & & & & & & & & & & & & & & \\
      & & & & & & & & & & & & & & \\
      & & *{} \ar@{-}[rrrrrrr] \ar@{-}[uuuuuuu] & & & & & & & *{} \ar@{-}[uuuuuuulllllll] & & & & & \\
      & & & *{} \ar@{-}[rrrrrrr] \ar@{-}[dddddddrrrrrrr] & & & & & & & *{} \ar@{-}[ddddddd] & & & & \\
      & & & & & & & & & & & *{} & & & & \\
      & & & & & & & & & & & & & & & \\
      & & & & & & & & & & & & & & \\
      & & & & & & & & & & & & & & \\
      & & & & & & & & & & & & & & & \\
      & & & & & & & & & & & & & & & & \\
      & & & & & & & & & & & & & & & & & \\
      & & & & & & & & & & & *{} \ar@{-}[uuuuuuu] \ar@{-}[rrrrrrr] & & & & & & & *{} \ar@{-}[uuuuuuulllllll] \\
      & \\
      & & & & & & & & & & & & & & & & \dddots
    }
\]
  \caption{A covering of a friese by translated copies of the
    fundamental region and its reflection}
\label{fig:CC4}
\end{figure}
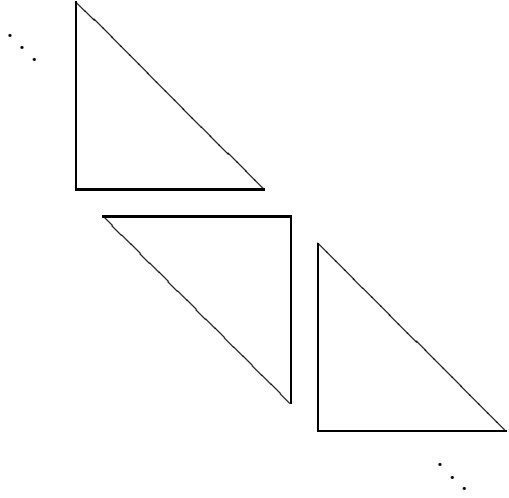
\end{Remark}

\section{From triangulations of the strip to $\SL2$-tilings}
\label{sec:Phi}

\begin{Construction}
\label{con:Phi}
Let $\fT$ be a triangulation of the strip.  We construct an
$\SL2$-tiling with enough ones,
\[
  t = \Phi( \fT ),
\]
as follows.

Let $( i,j ) \in \BZ \times \BZ$ be given and consider the arc $( i^{
  \circ } , j_{ \circ } )$.  Choose arcs $( p^{ \circ } , q_{ \circ }
), ( r^{ \circ } , s_{ \circ } ) \in \fT$ with $p < i < r$, $s < j <
q$; this is possible according to Equation \eqref{equ:triangulation}
in Definition \ref{def:triangulation}.  Figure
\ref{fig:triangulation3} shows the resulting situation.
\begin{figure}
  \centering
  \begin{tikzpicture}[xscale=2.00,yscale=1.5]

    \tikzstyle{every node}=[font=\tiny]

    \draw (-2.66,1) -- (2.66,1);
    \draw (-2.66,-1) -- (2.66,-1);

    \draw (-1.66,0.95) -- (-1.66,1.05) node[anchor=south]{$r^{ \circ }$};
    \draw (-0.66,0.95) -- (-0.66,1.05) node[anchor=south]{$i^{ \circ }$};
    \draw (1.33,0.95) -- (1.33,1.05) node[anchor=south]{$p^{ \circ }$};

    \draw (-2.00,-0.95) -- (-2.00,-1.05) node[anchor=north]{$s_{ \circ }$};
    \draw (0.33,-0.95) -- (0.33,-1.05) node[anchor=north]{$j_{ \circ }$};
    \draw (2.00,-0.95) -- (2.00,-1.05) node[anchor=north]{$q_{ \circ }$};

    \draw (-2.00,-1) .. controls (-2.00,-0.4) and (-1.66,0.4) .. (-1.66,1);
    \draw[red] (0.33,-1) .. controls (0.33,-0.4) and (-0.66,0.4) .. (-0.66,1);
    \draw (2.00,-1) .. controls (2.00,-0.4) and (1.33,0.4) .. (1.33,1);

  \end{tikzpicture} 
  \caption{The arcs  $( r^{ \circ } , s_{ \circ } )$ and $( p^{ \circ
    } , q_{ \circ } )$ define a finite polygon $P$ which has $( i^{
      \circ } , j_{ \circ } )$ as a diagonal}
\label{fig:triangulation3}
\end{figure}
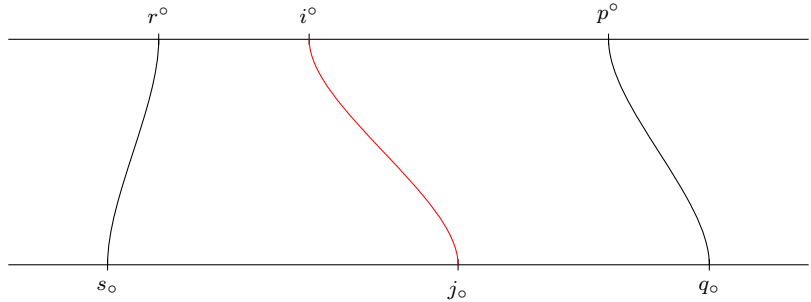

Now $\{\: p^{ \circ } , \ldots , r^{ \circ } , s_{ \circ } , \ldots ,
q_{ \circ } \:\}$ can be viewed as the vertices of a finite polygon
$P$ and the arcs situated between $( r^{ \circ } , s_{ \circ } )$ and
$( p^{ \circ } , q_{ \circ } )$ can be viewed as the diagonals of $P$.
In particular, the arcs of $\fT$ situated between $( r^{ \circ } , s_{
  \circ } )$ and $( p^{ \circ } , q_{ \circ } )$ form a triangulation
$\fT_P$ of $P$.

Define $t$ in terms of the map from Remark \ref{rmk:BCI} by setting
\begin{equation}
\label{equ:BCI}
  t_{ ij } = \fT_P( i^{ \circ } , j_{ \circ } ).
\end{equation}
\end{Construction}

\begin{Remark}
In Construction \ref{con:Phi} we started with $( i,j ) \in \BZ \times
\BZ$ and considered the arc $( i^{ \circ } , j_{ \circ } )$.  This is
an arbitrary choice and we could as well have considered $( j^{ \circ
} , i_{ \circ } )$.
\end{Remark}

\begin{Proposition}
\label{pro:Phi}
Let $\fT$ be a triangulation of the strip.  Construction
\ref{con:Phi} gives a well-defined $\SL2$-tiling $t = \Phi( \fT
)$ with enough ones.  It has the property
\begin{equation}
\label{equ:t1}
  t_{ ij } = 1
  \;\; \Leftrightarrow \;\;
  ( i^{ \circ } , j_{ \circ } ) \in \fT.
\end{equation}
\end{Proposition}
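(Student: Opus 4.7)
The proof will proceed by verifying in sequence: (i) well-definedness of $t_{ij}$, (ii) positivity and integrality, (iii) the determinant relation \eqref{equ:determinant1}, (iv) the characterisation \eqref{equ:t1}, and (v) the enough-ones condition \eqref{equ:tiling}. The thread running through the argument is that Conway--Coxeter frieze values are ``local'' in nature: the number $\fT_P(i^{\circ},j_{\circ})$ is determined by the triangles of $\fT$ crossed by the arc $(i^{\circ},j_{\circ})$, and these triangles are caged inside any admissible choice of bounding polygon.

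For well-definedness, given two choices of bounding arcs in $\fT$, one uses \eqref{equ:triangulation} to produce a third pair of bounding arcs $(p_0^{\circ},q_{0\,\circ}),(r_0^{\circ},s_{0\,\circ})\in \fT$ whose associated polygon $P_0$ contains both. Non-crossing of arcs in $\fT$ forces $\fT_{P_0}$ to contain each of the smaller triangulations as a sub-triangulation, so it suffices to check that whenever $P\subseteq P_0$ both arise from arcs of $\fT$, the identity $\fT_P(i^{\circ},j_{\circ}) = \fT_{P_0}(i^{\circ},j_{\circ})$ holds. Any triangle of $\fT_{P_0}$ pierced by the diagonal $(i^{\circ},j_{\circ})$ already lies in $P$, since otherwise one of its edges would cross a bounding arc of $P$, which is itself in $\fT$. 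Hence the continuant/Ptolemy computation of the frieze value on $(i^{\circ},j_{\circ})$ involves identical data in $P$ and in $P_0$, and the two friezes assign the same value. This step is the principal technical obstacle; the remaining parts are essentially formal.

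Positivity and integrality of $t_{ij}$ are inherited from Remark \ref{rmk:BCI}. For the determinant identity \eqref{equ:determinant1}, apply \eqref{equ:triangulation} to produce a single polygon $P$ whose vertices include $i^{\circ}, (i+1)^{\circ}, j_{\circ}, (j+1)_{\circ}$; then all four entries of the $2\times 2$ submatrix are $\fT_P$-values of diagonals of the same polygon, and \eqref{equ:determinant1} is just the Ptolemy/unimodular relation built into the defining structure of a Conway--Coxeter frieze.

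The characterisation \eqref{equ:t1} then follows from the standard fact that in a Conway--Coxeter frieze the value on a diagonal or side equals $1$ if and only if that diagonal is a side of the polygon or belongs to the triangulation; since $p<i<r$ and $s<j<q$ ensure $(i^{\circ},j_{\circ})$ is a strict diagonal of $P$, this collapses to $t_{ij}=1 \Leftrightarrow (i^{\circ},j_{\circ})\in \fT_P \Leftrightarrow (i^{\circ},j_{\circ})\in\fT$. Finally, the enough-ones property \eqref{equ:tiling} is an immediate translation of \eqref{equ:triangulation} through \eqref{equ:t1}: given $(i,j)$, the triangulation axiom produces arcs $(p^{\circ},q_{\circ})\in\fT$ in each of the quadrants $(<i,>j)$ and $(>i,<j)$, and the corresponding tiling entries are forced to be $1$.
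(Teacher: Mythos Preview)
Your proposal is correct and follows essentially the same approach as the paper: both verify well-definedness via the locality of Conway--Coxeter values under polygon extension, then read off the $\SL2$-relation, the characterisation \eqref{equ:t1}, and the enough-ones condition from standard frieze facts. The one organizational difference is that you pass to a common larger polygon $P_0$ and invoke the triangles-crossed interpretation of the value, whereas the paper compares the two polygons directly (one is obtained from the other by gluing triangulated polygons onto its bounding edges) and cites \cite[lemmas~1 and~2(a)]{BCI} for that step.
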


\begin{proof}
$t$ is well-defined: The definition of $t_{ ij }$ involves the choice
of two arcs $( p^{ \circ } , q_{ \circ } ), ( r^{ \circ } , s_{ \circ
} ) \in \fT$.  Another choice, say $( p^{ \prime \circ } , q^{ \prime
}_{ \circ } ), ( r^{ \prime \circ } , s^{ \prime }_{ \circ } ) \in
\fT$, gives another finite polygon $Q$ with vertices $\{\: p^{ \prime
  \circ } , \ldots , r^{ \prime \circ } , s^{ \prime }_{ \circ } ,
\ldots , q^{ \prime }_{ \circ } \:\}$, and the arcs in $\fT$ which are
situated $( r^{ \prime \circ } , s^{ \prime }_{ \circ } )$ and $( p^{
  \prime \circ } , q^{ \prime }_{ \circ } )$ form a triangulation
$\fT_Q$ of $Q$.  We must show 
\begin{equation}
\label{equ:B}
  \fT_P( i^{ \circ } , j_{ \circ } )
  = \fT_Q( i^{ \circ } , j_{ \circ } ).
\end{equation}
However, in the case shown in Figure \ref{fig:triangulation4} the
triangulation $\fT_Q$ can be obtained from $\fT_P$ by gluing
triangulated polygons to the edges $( r^{ \circ } , s_{ \circ } )$ and
$( p^{ \circ } , q_{ \circ } )$ of $P$, so \eqref{equ:B} follows from
\cite[lemmas 1 and 2(a)]{BCI} by induction. 
\begin{figure}
  \centering
  \begin{tikzpicture}[xscale=1.5,yscale=1.125]

    \tikzstyle{every node}=[font=\tiny]

    \draw (-4.00,1) -- (3.66,1);
    \draw (-4.00,-1) -- (3.66,-1);

    \draw (-3.33,0.95) -- (-3.33,1.05) node[anchor=south]{$r^{ \prime \circ }$};
    \draw (-1.66,0.95) -- (-1.66,1.05) node[anchor=south]{$r^{ \circ }$};
    \draw (-0.66,0.95) -- (-0.66,1.05) node[anchor=south]{$i^{ \circ }$};
    \draw (1.33,0.95) -- (1.33,1.05) node[anchor=south]{$p^{ \circ }$};
    \draw (1.66,0.95) -- (1.66,1.05) node[anchor=south]{$p^{ \prime \circ }$};

    \draw (-3.00,-0.95) -- (-3.00,-1.05) node[anchor=north]{$s^{ \prime }_{ \circ }$};
    \draw (-2.00,-0.95) -- (-2.00,-1.05) node[anchor=north]{$s_{ \circ }$};
    \draw (0.33,-0.95) -- (0.33,-1.05) node[anchor=north]{$j_{ \circ }$};
    \draw (2.00,-0.95) -- (2.00,-1.05) node[anchor=north]{$q_{ \circ  }$};
    \draw (3.00,-0.95) -- (3.00,-1.05) node[anchor=north]{$q^{ \prime }_{ \circ }$};

    \draw (-3.00,-1) .. controls (-3.00,-0.4) and (-3.33,0.4) .. (-3.33,1);
    \draw (-2.00,-1) .. controls (-2.00,-0.4) and (-1.66,0.4) .. (-1.66,1);
    \draw[red] (0.33,-1) .. controls (0.33,-0.4) and (-0.66,0.4) .. (-0.66,1);
    \draw (2.00,-1) .. controls (2.00,-0.4) and (1.33,0.4) .. (1.33,1);
    \draw (3.00,-1) .. controls (3.00,-0.4) and (1.66,0.4) .. (1.66,1);

  \end{tikzpicture} 
  \caption{An alternative choice  $( r^{ \prime \circ } , s^{ \prime
    }_{ \circ } )$ and $( p^{ \prime \circ } , q^{ \prime }_{ \circ } )$}
\label{fig:triangulation4}
\end{figure}
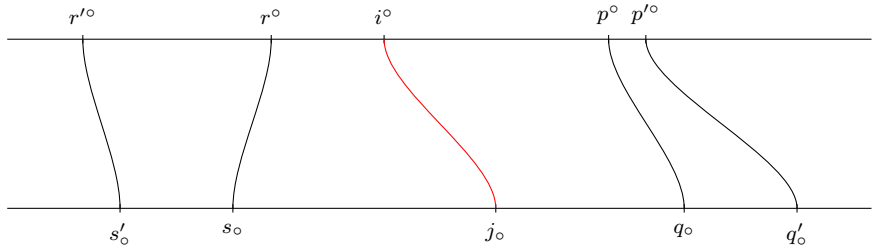
The other possible cases have $( p^{ \circ } , q_{ \circ } )$ and $(
p^{ \prime \circ } , q^{ \prime }_{ \circ } )$ interchanged and/or $(
r^{ \circ } , s_{ \circ } )$ and $( r^{ \prime \circ } , s^{ \prime
}_{ \circ } )$ interchanged; they are handled by the same means.

$t$ is an $\SL2$-tiling: The values of $\fT_P ( -,- )$ are positive
integers by Remark \ref{rmk:BCI}, so we just have to prove Equation
\eqref{equ:determinant1} for $i,j \in \BZ$.  Definition
\ref{def:triangulation} permits us to choose $( p^{ \circ } , q_{
  \circ } ), ( r^{ \circ } , s_{ \circ } ) \in \fT$ with $p < i < i+1
< r$, $s < j < j+1 < q$.  Then Equation \eqref{equ:determinant1}
amounts to
\[
  \left|
    \begin{array}{cc}
      \fT_P( i^{ \circ },j_{ \circ} ) & \fT_P( i^{ \circ },(j+1)_{ \circ} ) \\[2.5mm]
      \fT_P( (i+1)^{ \circ },j_{ \circ} ) & \fT_P( (i+1)^{ \circ },(j+1)_{ \circ} )
    \end{array}
  \right|
  = 1,
\]
which is true since $\fT_P( -,- )$ defines a friese, see Remark
\ref{rmk:BCI}.

The biimplication \eqref{equ:t1}: Follows from
\[
  \fT_P( i^{ \circ},j_{ \circ } ) = 1
  \;\; \Leftrightarrow \;\;
  ( i^{ \circ } , j_{ \circ } ) \in \fT_P,
\]
see \cite[(32)]{CC2}.

$t$ has enough ones: Follows from the last part of Definition
\ref{def:triangulation} and the biimplication \eqref{equ:t1}.
\end{proof}

\begin{Remark}
\label{rmk:ARS}
It was shown in \cite[thm.\ 3]{ARS} that an infinite zig-zag path of
ones in the plane can be extended to an $\SL2$-tiling.  This is a
special case of Construction \ref{con:Phi}: If $\fT$ is a
triangulation of the strip which has only connecting arcs, then it is
easy to see from Equation \eqref{equ:t1} that $t = \Phi( \fT )$ has a
zig-zag path of ones.  It is also not hard to see that any zig-zag
path can be obtained from such a $\fT$.
\end{Remark}

\section{Properties of $\SL2$-tilings I: Ptolemy formulae}
\label{sec:properties1}

The idea of this section is to think of an $\SL2$-tiling $t$ as a map
\[
  ( i^{ \circ } , j_{ \circ } ) \mapsto t_{ ij }
\]
from the set of connecting arcs to the set of positive integers.  We
will define two other maps
\[
  ( i^{ \circ } , j^{ \circ } ) \mapsto c_{ ij }
  \;\; , \;\;
  ( i_{ \circ } , j_{ \circ } ) \mapsto d_{ ij }
\]
from internal arcs to positive integers.  Between them, $t$, $c$, and
$d$ can be thought of as a map $\chi$ from the set of all arcs to the
set of positive integers.
\begin{figure}
  \centering
  \begin{tikzpicture}[xscale=0.75,yscale=0.75]


    \draw[red] (-3,-2) -- (0.9,0.6) node[above=2pt]{$\fa$} -- (3,2);
    \draw[red] (-3,2) -- (0.9,-0.6) node[below=2pt]{$\fa^{ \prime }$} -- (3,-2);

    \draw (-3,-2) -- (-3,0) node[left]{$\ff$} -- (-3,2);
    \draw (-3,2) -- (0,2) node[above]{$\fe$} -- (3,2);
    \draw (3,2) -- (3,0) node[right]{$\ff^{ \prime }$} -- (3,-2);
    \draw (3,-2) -- (0,-2) node[below]{$\fe^{ \prime }$}-- (-3,-2);


  \end{tikzpicture} 
  \caption{Schematic of crossing arcs enclosed in a quadrangle of
    arcs} 
\label{fig:Ptolemy1}
\end{figure}
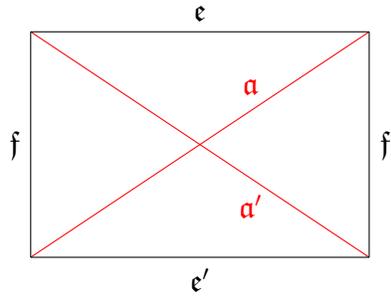
We will show several instances of the Ptolemy formula
\[
  \chi( \fa )\chi( \fa^{ \prime } )
  = \chi( \fe )\chi( \fe^{ \prime } ) + \chi( \ff )\chi( \ff^{ \prime } )
\]
when $\fa$, $\fa^{ \prime }$ are crossing arcs enclosed in a
quadrangle of arcs $\fe$, $\ff$, $\fe^{ \prime }$, $\ff^{ \prime }$ as
shown schematically in Figure \ref{fig:Ptolemy1}.

\begin{Definition}
\label{def:cd}
Let $t$ be an $\SL2$-tiling and let $i < j$ be integers.  Choose an
integer $a$ and set
\[
  c_{ ij }
  =
  \left|
    \begin{array}{cc}
      t_{ ia } & t_{ i,a+1 } \\[2.5mm]
      t_{ ja } & t_{ j,a+1 }
    \end{array}
  \right|
  \;\; , \;\;
  d_{ ij }
  =
  \left|
    \begin{array}{cc}
      t_{ ai } & t_{ aj } \\[2.5mm]
      t_{ a+1,i } & t_{ a+1,j }
    \end{array}
  \right|.
\]
\end{Definition}

\begin{Remark}
\label{rmk:cd}
The determinants in Definition \ref{def:cd} are independent of the
choice of $a$.  This follows from \cite[prop.\ 11.2]{R} because
\cite[prop.\ 1]{BR} implies that $t$ is {\em tame} in the terminology
of \cite[sec.\ 1]{BR}.  Note that
\[
  c_{ i,i+1 } = d_{ i,i+1 } = 1
\]
for $i \in \BZ$ since $t$ is an $\SL2$-tiling.
\end{Remark}

\begin{Remark}
Consider a $2 \times n$--matrix with $n \geq 4$ and look at
columns number $i$, $j$, $k$, $\ell$ for $i < j < k < \ell$.
\[
  u = 
  \left(
    \begin{array}{ccccccccc}
      \cdots & u_{ 1i } & \cdots & u_{ 1j } & \cdots & u_{ 1k } & \cdots & u_{ 1\ell } & \cdots \\[2.5mm]
      \cdots & u_{ 2i } & \cdots & u_{ 2j } & \cdots & u_{ 2k } & \cdots & u_{ 2\ell } & \cdots \\[2.5mm]
    \end{array}
  \right)
\]
It is classic, and elementary to show, that we have the following
Ptolemy formula.
\[
  \left|
    \begin{array}{cc}
      u_{ 1i } & u_{ 1k } \\[2.5mm]
      u_{ 2i } & u_{ 2k }
    \end{array}
  \right|
  \left|
    \begin{array}{cc}
      u_{ 1j } & u_{ 1\ell } \\[2.5mm]
      u_{ 2j } & u_{ 2\ell }
    \end{array}
  \right|
  =
  \left|
    \begin{array}{cc}
      u_{ 1i } & u_{ 1j } \\[2.5mm]
      u_{ 2i } & u_{ 2j }
    \end{array}
  \right|
  \left|
    \begin{array}{cc}
      u_{ 1k } & u_{ 1\ell } \\[2.5mm]
      u_{ 2k } & u_{ 2\ell }
    \end{array}
  \right|
  +
  \left|
    \begin{array}{cc}
      u_{ 1i } & u_{ 1\ell } \\[2.5mm]
      u_{ 2i } & u_{ 2\ell }
    \end{array}
  \right|
  \left|
    \begin{array}{cc}
      u_{ 1j } & u_{ 1k } \\[2.5mm]
      u_{ 2j } & u_{ 2k }
    \end{array}
  \right|.
\]
\end{Remark}

This implies the following result, which says that the Ptolemy formula
holds for crossing internal arcs such as the ones in Figure
\ref{fig:Ptolemy2}. 
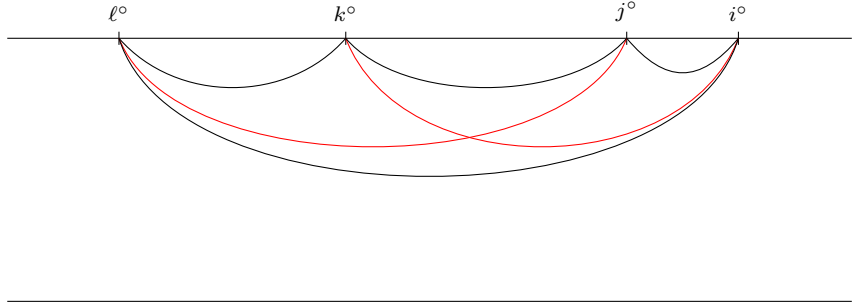
\begin{figure}
  \centering
  \begin{tikzpicture}[xscale=2.25,yscale=1.75]

    \tikzstyle{every node}=[font=\tiny]

    \draw (-2.66,1) -- (2.33,1);
    \draw (-2.66,-1) -- (2.33,-1);

    \draw (-2.00,0.95) -- (-2.00,1.05) node[anchor=south]{$\ell^{ \circ }$};
    \draw (-0.66,0.95) -- (-0.66,1.05) node[anchor=south]{$k^{ \circ }$};
    \draw (1.00,0.95) -- (1.00,1.05) node[anchor=south]{$j^{ \circ }$};
    \draw (1.66,0.95) -- (1.66,1.05) node[anchor=south]{$i^{ \circ }$};

    \draw[red] (-2.00,1) .. controls (-1.66,-0.1) and (0.66,-0.1) .. (1.00,1);
    \draw[red] (-0.66,1) .. controls (-0.33,-0.1) and (1.33,-0.1) .. (1.66,1);

    \draw (-2.00,1) .. controls (-1.66,0.5) and (-1.00,0.5) .. (-0.66,1);
    \draw (-2.00,1) .. controls (-1.66,-0.4) and (1.33,-0.4) .. (1.66,1);
    \draw (-0.66,1) .. controls (-0.33,0.5) and (0.66,0.5) .. (1.00,1);
    \draw (1.00,1) .. controls (1.23,0.65) and (1.43,0.65) .. (1.66,1);

  \end{tikzpicture} 
  \caption{Crossing internal arcs $( i^{ \circ } , k^{ \circ } )$ and
    $( j^{ \circ } , \ell^{ \circ } )$}
\label{fig:Ptolemy2}
\end{figure}

\begin{Proposition}
\label{pro:Ptolemy1}
Let $t$ be an $\SL2$-tiling, $i < j < k < \ell$ integers.  Then
\[
  c_{ ik }c_{ j\ell } = c_{ ij }c_{ k\ell } + c_{ i\ell }c_{ jk }
  \;\; , \;\;
  d_{ ik }d_{ j\ell } = d_{ ij }d_{ k\ell } + d_{ i\ell }d_{ jk }.
\]
\end{Proposition}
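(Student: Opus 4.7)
The plan is to derive both Ptolemy identities directly from the classical $2\times n$ Ptolemy formula stated just above, applied to suitable $2\times \infty$ submatrices of $t$. The key observation is that each of $c_{ij}$ and $d_{ij}$ can itself be recognised as the $2\times 2$ determinant formed by two columns of such a matrix.

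For the first identity, fix any integer $a$ and let $u$ be the $2\times \infty$ matrix with columns indexed by $m \in \BZ$, defined by $u_{1m} = t_{ma}$ and $u_{2m} = t_{m,a+1}$; equivalently, $u$ is the transpose of the two columns of $t$ with indices $a$ and $a+1$. Expanding the determinant in Definition \ref{def:cd} gives
\[
  c_{ij} = t_{ia}t_{j,a+1} - t_{ja}t_{i,a+1},
\]
which is exactly the determinant of columns $i$ and $j$ of $u$. Applying the stated $2\times n$ Ptolemy formula to $u$ with column indices $i < j < k < \ell$ then yields the first identity at once.

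For the second identity, the same argument runs with $u$ replaced by the $2\times \infty$ matrix $u'$ whose two rows are row $a$ and row $a+1$ of $t$, so $u'_{1m} = t_{am}$ and $u'_{2m} = t_{a+1,m}$. Then $d_{ij}$ unfolds as the determinant of columns $i$ and $j$ of $u'$, and the Ptolemy formula applied to $u'$ delivers the second identity. Independence of $c_{ij}$ and $d_{ij}$ from the choice of $a$ is already guaranteed by Remark \ref{rmk:cd}, so fixing $a$ at the outset costs nothing.

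There is no real obstacle here; the entire proof rests on the mild transposition trick that reveals $c_{ij}$ and $d_{ij}$ as column-minors of a $2\times \infty$ matrix, after which both identities are just instances of the classical $2\times 4$ Plücker relation already recorded in the preceding remark.
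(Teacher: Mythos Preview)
Your proof is correct and is precisely the argument the paper intends: the paper states the classical $2\times n$ Ptolemy relation in the preceding remark and then says only ``This implies the following result'', leaving the reader to make exactly the identification you spell out. Your version simply fills in the details of recognising $c_{ij}$ and $d_{ij}$ as the $2\times 2$ minors of the appropriate two-row submatrices of $t$, so there is no difference in approach.
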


The following proposition says that the Ptolemy formula holds for an
internal arc crossing a connecting arc as in Figure
\ref{fig:Ptolemy3}. 
\begin{figure}
  \centering
  \begin{tikzpicture}[xscale=2.25,yscale=1.75]

    \tikzstyle{every node}=[font=\tiny]

    \draw (-2.66,1) -- (1.66,1);
    \draw (-2.66,-1) -- (1.66,-1);

    \draw (-2.00,0.95) -- (-2.00,1.05) node[anchor=south]{$k^{ \circ }$};
    \draw (-0.66,0.95) -- (-0.66,1.05) node[anchor=south]{$j^{ \circ }$};
    \draw (1.00,0.95) -- (1.00,1.05) node[anchor=south]{$i^{ \circ }$};

    \draw (0.66,-0.95) -- (0.66,-1.05) node[anchor=north]{$a_{ \circ }$};

    \draw[red] (-2.00,1) .. controls (-1.66,-0.1) and (0.66,-0.1) .. (1.00,1);

    \draw (-2.00,1) .. controls (-1.66,0.5) and (-1.00,0.5) .. (-0.66,1);
    \draw (-0.66,1) .. controls (-0.33,0.5) and (0.66,0.5) .. (1.00,1);

    \draw (0.66,-1) .. controls (0.66,-0.4) and (-2.00,0.1) .. (-2.00,1);
    \draw[red] (0.66,-1) .. controls (0.66,-0.4) and (-0.66,0.4) .. (-0.66,1);
    \draw (0.66,-1) .. controls (0.66,-0.4) and (1.00,0.4) .. (1.00,1);

  \end{tikzpicture} 
  \caption{An internal arc $( i^{ \circ } , k^{ \circ } )$ crossing
    a connecting arc $( j^{ \circ } , a_{ \circ } )$}
\label{fig:Ptolemy3}
\end{figure}
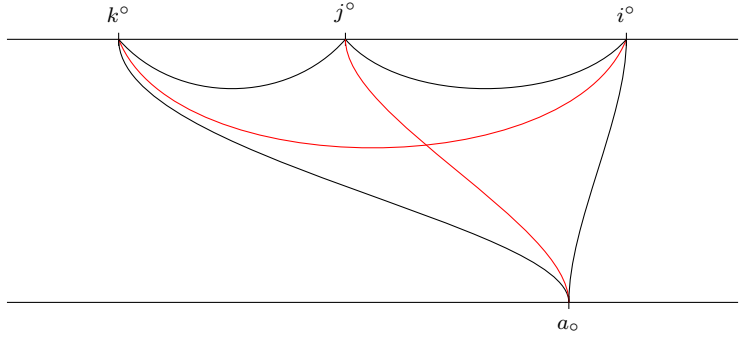
It can be proved from Definition \ref{def:cd} by direct computation.

\begin{Proposition}
\label{pro:Ptolemy2}
Let $t$ be an $\SL2$-tiling.  If $i < j < k$ and $a$ are integers,
then 
\[
  t_{ ja }c_{ ik } = t_{ ia }c_{ jk } + t_{ ka }c_{ ij }
  \;\; , \;\;
  t_{ aj }d_{ ik } = t_{ ai }d_{ jk } + t_{ ak }d_{ ij }.
\]
\end{Proposition}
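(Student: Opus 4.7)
The plan is to prove the first identity by direct expansion from Definition \ref{def:cd}, and observe that the second follows by the exactly parallel argument after interchanging the roles of rows and columns. Fix the integer $a$ of the statement; by Remark \ref{rmk:cd} we may compute each of $c_{ij}$, $c_{jk}$, and $c_{ik}$ from Definition \ref{def:cd} using this one value of $a$, so all three appear as $2 \times 2$ determinants built from the single pair of adjacent columns $a$ and $a+1$. This ``common witness'' step is crucial; without it the three $c$-determinants would involve different columns and the manipulation below would not close up.

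Write $R_\alpha = ( t_{ \alpha a } , t_{ \alpha , a+1 } )$ for $\alpha \in \{ i,j,k \}$, viewed as row vectors in $\BZ^2$, so that $c_{ij} = \det( R_i , R_j )$ and similarly for the other two $c$'s. Since any three vectors in a two-dimensional space are linearly dependent, a short Cramer-style manipulation (express $R_j$ as a combination of $R_i$ and $R_k$, then clear denominators) yields the universal vector identity
\[
  R_j \det( R_i , R_k ) = R_i \det( R_j , R_k ) + R_k \det( R_i , R_j )
\]
in $\BZ^2$. Reading off its first coordinate produces precisely $t_{ ja } c_{ ik } = t_{ ia } c_{ jk } + t_{ ka } c_{ ij }$. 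Equivalently, one multiplies out the three $c$-determinants and observes that the six-term expansions on the two sides both collapse to $t_{ ja } t_{ ia } t_{ k,a+1 } - t_{ ja } t_{ i,a+1 } t_{ ka }$, since the two copies of $t_{ ia } t_{ ka } t_{ j,a+1 }$ cancel.

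For the second identity the argument is the transpose. Fix $a$, form the column vectors $C_\alpha = ( t_{ a \alpha } , t_{ a+1 , \alpha } )^T$ for $\alpha \in \{ i,j,k \}$, note that $d_{ \alpha\beta } = \det( C_\alpha , C_\beta )$ by Definition \ref{def:cd}, and apply the same three-vector identity to $C_i , C_j , C_k$. Its first coordinate now reads $t_{ aj } d_{ ik } = t_{ ai } d_{ jk } + t_{ ak } d_{ ij }$, which is the desired formula. The only real obstacle is keeping indices straight—there is no estimate, no case analysis, and no appeal to tameness beyond the $a$-independence already provided by Remark \ref{rmk:cd}.
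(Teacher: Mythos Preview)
Your proof is correct and follows exactly the approach the paper indicates: the paper does not give a detailed proof but simply remarks that the proposition ``can be proved from Definition \ref{def:cd} by direct computation,'' which is precisely what you do. Your organization via the three-vector identity $R_j\det(R_i,R_k)=R_i\det(R_j,R_k)+R_k\det(R_i,R_j)$ is a clean way to package the expansion, and your explicit use of Remark \ref{rmk:cd} to pin all three $c$-determinants to the same column pair $(a,a+1)$ is exactly the point that makes the computation work.
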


The following proposition shows an easy consequence.

\begin{Proposition}
\label{pro:cd}
Let $t$ be an $\SL2$-tiling, $i < j$ integers.  Then $c_{ ij }$ and
$d_{ ij }$ are positive integers.
\end{Proposition}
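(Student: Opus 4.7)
The plan is to argue by strong induction on the gap $j - i$, using Proposition \ref{pro:Ptolemy2} to reduce to strictly smaller gaps. At the outset I note that $c_{ij}$ and $d_{ij}$ are automatically integers, being $2 \times 2$ determinants of matrices with entries in $\{1,2,3,\ldots\}$; the substance of the statement is therefore the positivity.

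The base case $j - i = 1$ is immediate from Remark \ref{rmk:cd}, which records $c_{i,i+1} = d_{i,i+1} = 1$. For the inductive step, fix $n \geq 2$ and assume the claim for all gaps in $\{1,\ldots,n-1\}$. Consider $i < k$ with $k - i = n$, choose any intermediate $j$ with $i < j < k$, and any integer $a$. The first identity of Proposition \ref{pro:Ptolemy2} rearranges to
\[
  c_{ik} = \frac{t_{ia}\, c_{jk} + t_{ka}\, c_{ij}}{t_{ja}}.
\]
By the inductive hypothesis applied to the smaller gaps $j - i$ and $k - j$, both $c_{ij}$ and $c_{jk}$ are positive integers; combined with positivity of the $t$-entries, the right-hand side is a positive rational. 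Since $c_{ik}$ is already known to be an integer, it is a positive integer. The second identity of Proposition \ref{pro:Ptolemy2} settles $d_{ik}$ by the same argument.

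The only conceptual point worth flagging is that the Ptolemy identity expresses $c_{ik}$ as a \emph{rational} rather than polynomial combination of smaller $c$'s — one divides by $t_{ja}$ — so the a priori integrality of $c_{ik}$ coming from its determinantal definition is essential to close the induction. Beyond this bit of bookkeeping, the argument is routine and I do not anticipate any genuine obstacle.
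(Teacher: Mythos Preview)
Your proof is correct and follows essentially the same approach as the paper: induction on the gap, invoking Proposition~\ref{pro:Ptolemy2} to express the new $c$ (resp.\ $d$) as a positive rational, and closing with the a~priori integrality from the determinantal definition. The only cosmetic difference is that the paper fixes the intermediate point to be $j$ in the triple $i < j < j+1$, so that $c_{j,j+1}=1$ and ordinary (rather than strong) induction suffices.
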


\begin{proof}
It is clear from Definition \ref{def:cd} that $c_{ ij }$ and $d_{ ij
}$ are integers, so it remains to see that $c_{ ij }, d_{ ij } > 0$.
We have $c_{ i,i+1 } = d_{ i,i+1 } = 1 > 0$ by Remark \ref{rmk:cd}.
We proceed by induction on $j - i \geq 1$.  Since $i < j < j+1$,
Proposition \ref{pro:Ptolemy2} gives
\[
  c_{ i,j+1 }
  = \frac{ t_{ ia }c_{ j,j+1 } + t_{ j+1,a }c_{ ij } }{ t_{ ja } }
  = \frac{ t_{ ia } + t_{ j+1,a }c_{ ij } }{ t_{ ja } }
\]
for each $a \in \BZ$.  The induction implies that this expression is
positive, and $d_{ ij }$ is handled analogously.
\end{proof}

Finally, we show that the Ptolemy formula holds for crossing
connecting arcs as in Figure \ref{fig:Ptolemy4}.  This formula can be
written by means of a determinant.
\begin{figure}
  \centering
  \begin{tikzpicture}[xscale=2.25,yscale=1.75]

    \tikzstyle{every node}=[font=\tiny]

    \draw (-2.66,1) -- (1.66,1);
    \draw (-2.66,-1) -- (1.66,-1);

    \draw (-2.00,0.95) -- (-2.00,1.05) node[anchor=south]{$j^{ \circ }$};
    \draw (1.00,0.95) -- (1.00,1.05) node[anchor=south]{$i^{ \circ }$};

    \draw (-0.66,-0.95) -- (-0.66,-1.05) node[anchor=north]{$p_{ \circ }$};
    \draw (0.66,-0.95) -- (0.66,-1.05) node[anchor=north]{$q_{ \circ }$};

    \draw (-2.00,1) .. controls (-1.66,0.1) and (0.66,0.1) .. (1.00,1);
    \draw (-0.66,-1) .. controls (-0.33,-0.4) and (0.33,-0.4) .. (0.66,-1);

    \draw[red] (0.66,-1) .. controls (0.56,-0.2) and (-1.80,0.0) .. (-2.00,1);
    \draw[red] (-0.66,-1) .. controls (-0.56,-0.2) and (0.90,0.2) .. (1.00,1);
    \draw (-0.66,-1) .. controls (-0.66,-0.4) and (-2.00,0.4) .. (-2.00,1);
    \draw (0.66,-1) .. controls (0.66,-0.4) and (1.00,0.4) .. (1.00,1);

  \end{tikzpicture} 
  \caption{Crossing connecting arcs $( i^{ \circ } , p_{ \circ } )$
    and $( j^{ \circ } , q_{ \circ } )$}
\label{fig:Ptolemy4}
\end{figure}
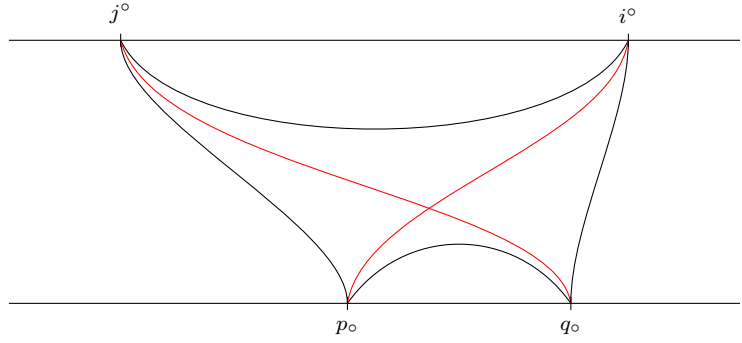

\begin{Proposition}
\label{pro:Ptolemy3}
Let $t$ be an $\SL2$-tiling, $i < j$ and $p< q$ integers.  Then
\[
  \left|
    \begin{array}{cc}
      t_{ ip } & t_{ iq } \\[2.5mm]
      t_{ jp } & t_{ jq }
    \end{array}
  \right|
  =
  c_{ ij }d_{ pq }.
\]
In particular, it follows from Proposition \ref{pro:cd} that the
determinant on the left hand side is a positive integer.
\end{Proposition}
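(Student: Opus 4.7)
The plan is to prove the identity by induction on $j - i \geq 1$, holding the columns $p < q$ fixed; by the evident row-column symmetry of the set-up, one could equally well induct on $q - p$ using the second identity in Proposition \ref{pro:Ptolemy2}.

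The base case $j = i+1$ is a direct unfolding of the definitions: Remark \ref{rmk:cd} gives $c_{i,i+1} = 1$, while choosing $a = i$ in Definition \ref{def:cd} makes $d_{pq}$ literally equal to the $2 \times 2$ determinant on the left-hand side, so the claimed identity collapses to $d_{pq} = 1 \cdot d_{pq}$.

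For the inductive step, I would assume the identity for the pair $(i,j)$ and deduce it for $(i,j+1)$. Apply Proposition \ref{pro:Ptolemy2} to the triple $i < j < j+1$; since $c_{j,j+1} = 1$, this yields, for every integer $a$,
\[
t_{j+1,a}\,c_{ij} \;=\; t_{ja}\,c_{i,j+1} \,-\, t_{ia}.
\]
Using this with $a = p$ and with $a = q$ to rewrite $t_{j+1,p}$ and $t_{j+1,q}$ inside $c_{ij}\bigl(t_{ip}t_{j+1,q} - t_{iq}t_{j+1,p}\bigr)$, the cross-terms $\pm t_{ip}t_{iq}$ cancel, and the expression factors as $c_{i,j+1}\bigl(t_{ip}t_{jq} - t_{iq}t_{jp}\bigr)$. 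The induction hypothesis identifies this with $c_{i,j+1}\,c_{ij}\,d_{pq}$, and dividing through by $c_{ij}$, which is positive by Proposition \ref{pro:cd}, completes the induction step.

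There is no genuine obstacle here: once one notices that Proposition \ref{pro:Ptolemy2} allows one to express $t_{j+1,\bullet}$ as an $\{i,j\}$-linear combination with a clean prefactor, the proof is a short algebraic manipulation that simply chains the earlier propositions. The ``in particular'' clause about positivity of the left-hand determinant is then an immediate corollary of Proposition \ref{pro:cd}.
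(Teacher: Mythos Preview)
Your proof is correct. Both your argument and the paper's hinge on the same linear relation coming from Proposition~\ref{pro:Ptolemy2}, but you deploy it differently. The paper does not induct: it applies Proposition~\ref{pro:Ptolemy2} to the triple $i < j-1 < j$ to write $t_{ia} = t_{j-1,a}\,c_{ij} - t_{ja}\,c_{i,j-1}$, substitutes this for $t_{ip}$ and $t_{iq}$ in the determinant, and observes that the $c_{i,j-1}$-terms cancel while the surviving factor $t_{j-1,p}t_{jq} - t_{j-1,q}t_{jp}$ is \emph{by definition} $d_{pq}$ (with $a = j-1$). This gives the identity in one stroke, with no induction and no division. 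Your inductive route is perfectly valid but slightly less economical: it needs Proposition~\ref{pro:cd} to justify cancelling $c_{ij}$, whereas the paper's direct computation establishes the formula without ever dividing, so that the positivity statement really is an afterthought rather than an ingredient.
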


\begin{proof}
If $i$, $j$ are integers with $i \leq j-2$, then $i < j-1 < j$ and
Proposition \ref{pro:Ptolemy2} gives
\[
  t_{ j-1,a } c_{ ij } = t_{ ia }c_{ j-1,j } + t_{ ja }c_{ i,j-1 }
\]
for each integer $a$.  Combining with Remark \ref{rmk:cd} gives
\[
  t_{ ia } = t_{ j-1,a }c_{ ij } - t_{ ja }c_{ i,j-1 }.
\]
This equation remains true for $i = j-1$ if we make the temporary
assignment $c_{ii} = 0$, so it holds for $i < j$ and gives the second
equality in the following computation.
\begin{align*}
  \left|
    \begin{array}{cc}
      t_{ ip } & t_{ iq } \\[2.5mm]
      t_{ jp } & t_{ jq }
    \end{array}
  \right|
   & = t_{ ip }t_{ jq } - t_{ iq }t_{ jp } \\
   & = \big( t_{ j-1,p }c_{ ij } - t_{ jp }c_{ i,j-1 } \big) t_{ jq } \\
   &   \;\;\;\;\;\; - \big( t_{ j-1,q }c_{ ij } - t_{ jq }c_{ i,j-1 } \big) t_{ jp } \\
   & = \big( t_{ j-1,p }t_{ jq } - t_{ j-1,q }t_{ jp } \big)c_{ ij } \\
   & = c_{ ij }d_{ pq }.
\end{align*}
\end{proof}

\section{Properties of $\SL2$-tilings II: Forbidden values}
\label{sec:properties2}

This section shows two consequences of the Ptolemy formulae of
Section \ref{sec:properties1}.

\begin{Proposition}
\label{pro:finite}
Let $t$ be an $\SL2$-tiling, $n$ a positive integer.  If $i$ is fixed
then $t_{ ij } = n$ for at most finitely many values of $j$.  If $j$
is fixed then $t_{ ij } = n$ for at most finitely many values of $i$.
\end{Proposition}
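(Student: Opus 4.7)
The plan is to argue by contradiction. Suppose $i$ is fixed and the set $S = \{\, j \in \BZ : t_{ij} = n \,\}$ is infinite; I will produce a strictly decreasing infinite sequence of positive integers.

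The key input is Proposition \ref{pro:Ptolemy3}, applied to rows $i, i+1$ and an arbitrary pair of columns $p < q$ drawn from $S$. Combined with $c_{i,i+1} = 1$ from Remark \ref{rmk:cd}, it yields
\[
  d_{pq} \;=\; t_{ip}t_{i+1,q} - t_{iq}t_{i+1,p} \;=\; n(t_{i+1,q} - t_{i+1,p}),
\]
and since $d_{pq}$ is a positive integer by Proposition \ref{pro:cd}, this forces $t_{i+1,q} > t_{i+1,p}$. The symmetric computation with rows $i-1, i$ (and $c_{i-1,i} = 1$) gives $t_{i-1,p} > t_{i-1,q}$. Hence, restricted to $S$, the map $p \mapsto t_{i+1,p}$ is strictly increasing and the map $p \mapsto t_{i-1,p}$ is strictly decreasing.

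An infinite subset of $\BZ$ is unbounded above or unbounded below. If $S$ is unbounded above, pick $p_1 < p_2 < p_3 < \ldots$ in $S$; then $t_{i-1,p_1} > t_{i-1,p_2} > t_{i-1,p_3} > \ldots$ is a strictly decreasing infinite sequence of positive integers, which is impossible. If $S$ is unbounded below, pick a sequence in $S$ tending to $-\infty$ and use the monotonicity of $p \mapsto t_{i+1,p}$ to the same effect. Either way we reach a contradiction, so $S$ is finite.

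The statement with $j$ fixed is handled by the dual argument, with columns $j \pm 1$ playing the role of rows $i \pm 1$, using $d_{j,j+1} = 1$ in place of $c_{i,i+1} = 1$ and the positivity of $c$ in place of that of $d$. The only point requiring care is to invoke the Ptolemy identity with its row/column indices in the correct order so that the difference one obtains is positive; after that, the argument is just the well-ordering of the positive integers, so I do not expect any real obstacle.
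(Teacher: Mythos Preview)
Your proof is correct and follows essentially the same approach as the paper: obtain $d_{pq} = n(t_{i\pm 1,q} - t_{i\pm 1,p})$, use positivity of $d$ from Proposition \ref{pro:cd} to deduce strict monotonicity along the adjacent row, and derive an infinite descending chain of positive integers. The only cosmetic difference is that the paper reads off the relevant $2\times 2$ determinant directly from Definition \ref{def:cd} (choosing $a = i-1$ or $a = i$), whereas you route through Proposition \ref{pro:Ptolemy3} with $c_{i,i+1} = c_{i-1,i} = 1$; these are the same computation.
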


\begin{proof}
Fix $i$ and suppose that there is an increasing sequence of integers
\begin{equation}
\label{equ:sequence}
  j < k < \cdots
\end{equation}
with $t_{ ij } = t_{ ik } = \cdots = n$.  The two first terms in the
sequence give
\[
  d_{ jk }
  =
  \left|
    \begin{array}{cc}
      t_{ i-1,j } & t_{ i-1,k } \\[2.5mm]
      t_{ ij } & t_{ ik }
    \end{array}
  \right| =
  \left|
    \begin{array}{cc}
      t_{ i-1,j } & t_{ i-1,k } \\[2.5mm]
      n & n 
    \end{array}
  \right| =
  n ( t_{ i-1,j } - t_{ i-1,k } )
\]
and since $d_{ jk } > 0$ by Proposition \ref{pro:cd}, this implies
$t_{ i-1,j } > t_{ i-1,k }$.  Using the subsequent terms in
\eqref{equ:sequence} gives a string of inequalities $t_{ i-1,j } > t_{
  i-1,k } > \cdots$.  Since the values of $t$ are positive integers,
this implies that the sequence \eqref{equ:sequence} is only finitely
long.

The case of a decreasing sequence of integers $j > k > \cdots$ is
handled analogously using
\[
  d_{ kj }
  =
  \left|
    \begin{array}{cc}
      t_{ ik } & t_{ ij } \\[2.5mm]
      t_{ i+1,k } & t_{ i+1,j }
    \end{array}
  \right|.
\]
This shows the first claim and the second one is shown using $c_{ jk
}$. 
\end{proof}

\begin{Proposition}
\label{pro:one1}
Let $t$ be an $\SL2$-tiling.  If $t_{ ij } = 1$ then $t$ does not have
the value $1$ in the quadrants $( < i , < j )$ and $( > i , > j )$.
\end{Proposition}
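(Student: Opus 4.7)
The plan is to proceed by contradiction, exploiting the determinantal Ptolemy formula of Proposition \ref{pro:Ptolemy3} together with the positivity provided by Proposition \ref{pro:cd}.

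Suppose, toward a contradiction, that $t_{ij}=1$ and that there exists $(p,q)\in(<i,<j)$ with $t_{pq}=1$; that is, $p<i$, $q<j$, and $t_{pq}=1$. Since $p<i$ and $q<j$, I may apply Proposition \ref{pro:Ptolemy3} with the role of $(i,j)$ played by $(p,i)$ and the role of $(p,q)$ played by $(q,j)$. This yields
\[
  \left|
    \begin{array}{cc}
      t_{pq} & t_{pj} \\[2.5mm]
      t_{iq} & t_{ij}
    \end{array}
  \right|
  = c_{pi}\, d_{qj},
\]
which upon expanding the determinant and substituting $t_{pq}=t_{ij}=1$ becomes $1 - t_{pj}\,t_{iq} = c_{pi}\, d_{qj}$.

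Now I invoke Proposition \ref{pro:cd}: both $c_{pi}$ and $d_{qj}$ are positive integers, so the right-hand side is at least $1$. On the other hand, $t_{pj}$ and $t_{iq}$ are positive integers by the definition of an $\SL2$-tiling, so $t_{pj}\,t_{iq}\geq 1$ and hence the left-hand side is at most $0$. This contradiction shows that no such $(p,q)\in(<i,<j)$ can exist.

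The quadrant $(>i,>j)$ is handled by the same argument with the roles reversed: if $(p,q)\in(>i,>j)$ with $t_{pq}=1$, then $i<p$ and $j<q$, and Proposition \ref{pro:Ptolemy3} applied to the pairs $(i,p)$ and $(j,q)$ gives $1 - t_{iq}\,t_{pj} = c_{ip}\,d_{jq}\geq 1$, again a contradiction. No step here presents a genuine obstacle; the proof is essentially a direct calculation once one identifies the correct instance of the Ptolemy formula, and the main point is simply to recognise that positivity of $c_{ij}$ and $d_{ij}$ forces the determinant on the left to exceed what the presence of two ones at opposite corners allows.
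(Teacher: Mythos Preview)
Your proof is correct and follows essentially the same approach as the paper: both argue by contradiction, compute the $2\times 2$ determinant with ones on the diagonal, and use Proposition~\ref{pro:Ptolemy3} together with Proposition~\ref{pro:cd} to conclude that this determinant is simultaneously positive and nonpositive. The only cosmetic difference is that you write out the factorisation $c_{pi}d_{qj}$ explicitly, whereas the paper simply cites the positivity clause of Proposition~\ref{pro:Ptolemy3}.
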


\begin{proof}
If $t_{ xy } = 1$ for some $( x,y ) \in ( < i , < j )$ then
\[
  \left|
    \begin{array}{cc}
      t_{ xy } & t_{ xj } \\[2.5mm]
      t_{ iy } & t_{ ij }
    \end{array}
  \right| =
  \left|
    \begin{array}{cc}
      1 & t_{ xj } \\[2.5mm]
      t_{ iy } & 1
    \end{array}
  \right| =
  1 - t_{ xj }t_{ iy }.
\]
On the one hand, the determinant is positive by Proposition
\ref{pro:Ptolemy3}.  On the other hand, $t_{ xj }$ and $t_{ iy }$ are
positive integers so $1 - t_{ xj }t_{ iy } \leq 0$, a contradiction.
The case $( x,y ) \in ( > i , > j )$ is handled analogously.
\end{proof}

\section{Properties of $\SL2$-tilings III: A link to Conway--Coxeter
  frieses}
\label{sec:properties3}

Let us remind the reader that Section \ref{sec:CC} gave a few salient
facts on frieses.  This section shows a link between frieses and
$\SL2$-tilings.

The following lemma provides a converse to Remark
\ref{rmk:fundamental_region}.  It is certainly well-known but we do
not know a reference so give a proof.

\begin{Lemma}
\label{lem:CC1}
Let $D$ be a diagonal band and let $F$ be a triangle inside
$D$ as shown in Figure \ref{fig:CC2}.

Let $t$ be a partial $\SL2$-tiling defined on $F$ such that $t_{ ij }$
is equal to $1$ along the base of $F$ and at its apex, again as shown
in Figure \ref{fig:CC2}.

Then there is a friese defined on $D$ which agrees with $t$ on $F$.
\end{Lemma}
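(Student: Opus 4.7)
The plan is to use Remark \ref{rmk:BCI} to construct the desired friese from the data $t|_F$.

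Pick a vertical line segment $V$ of length $n$ inside $F$ that crosses $D$ from edge to edge, where $n$ is the vertical width of $D$; such a $V$ exists because $F$ spans the full vertical width of $D$ (one may choose $V$ through the apex, for instance). Then the restriction of $t$ to $V$ is a sequence $v_1, v_2, \ldots, v_n$ of positive integers with $v_1 = v_n = 1$, since the endpoints of $V$ lie on the 1-boundary of $F$ (along its base and at its apex).

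The central step is to show that this sequence coincides with $\fX(1,0), \fX(2,0), \ldots, \fX(n,0)$ for a unique triangulation $\fX$ of an $(n+1)$-gon $P$ with vertices $0, 1, \ldots, n$, in the notation of Remark \ref{rmk:BCI}. This is a classical Conway--Coxeter fact: the $\SL2$-relations on $F$ together with the 1's along the base force $(v_1, \ldots, v_n)$ to be realisable by a triangulation. The argument is by induction on $n$: one identifies an index $k$ at which the data demands the presence of a diagonal of $P$ in $\fX$, splits $P$ along this diagonal into two smaller polygons, and invokes the inductive hypothesis on each piece.

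Given such an $\fX$, Remark \ref{rmk:BCI} produces a friese $s$ on $D$ that agrees with $t$ on $V$ by construction. To conclude $s = t$ on all of $F$, I would observe that both $s$ and $t$ are partial $\SL2$-tilings on $F$ with the same prescribed 1-boundary (base and apex). Each cell of $F$ is a corner of a $2 \times 2$ block whose other three corners can be arranged to lie closer to $V$, and the $\SL2$-relation uniquely determines the fourth value from these (since all entries are strictly positive). Induction on the distance from $V$ then yields $s = t$ throughout $F$.

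The main obstacle is the central step, namely extracting a triangulation of $P$ from the sequence of values on $V$. This is classical but subtle Conway--Coxeter theory and requires careful inductive bookkeeping to produce the splitting diagonal; the remainder of the argument is essentially routine propagation of the $\SL2$-relation.
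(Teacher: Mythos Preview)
Your route is genuinely different from the paper's, and the difference matters.

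The paper does \emph{not} try to extract a triangulation from $t|_V$. Instead it invokes Propp's generic friese \cite[thm.\ 3.1]{P}: a map $u \colon D \to \BQ(x_1,\ldots,x_n)$ with Laurent-polynomial values, satisfying the $\SL2$-relation, equal to $1$ on the edges of $D$, and taking the values $x_1,\ldots,x_n$ on $V$. Specialising $x_i$ to the values of $t$ on $V$ gives a map $v \colon D \to \BQ$. Since $v$ and $t$ agree on $V$ and both satisfy the $\SL2$-relation, they agree on all of $F$. The key step is then Remark~\ref{rmk:fundamental_region}: the whole band $D$ is covered by glide-reflected copies of $F$, so the positive-integer values on $F$ propagate to positive-integer values on all of $D$. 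This makes $v$ a friese, and no triangulation is ever produced.

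Your ``central step'' --- showing that $(v_1,\ldots,v_n)$ is $\fX(1,0),\ldots,\fX(n,0)$ for some triangulation $\fX$ --- is not the classical Conway--Coxeter statement. Conway--Coxeter give a bijection between \emph{frieses on $D$} and triangulations; applying it here would require first knowing that $t|_F$ extends to a friese, which is precisely the lemma. Your inductive sketch (find a splitting diagonal from the data on $V$) does not obviously work: if no interior $v_k$ equals $1$ you would want to cut off the ear at vertex $0$, but then the induction needs the values $\fX(i,1)$ on a \emph{different} vertical segment, and you have not said how to access those within $F$ without already controlling more of the pattern. This can probably be repaired by working with all of $t|_F$ rather than just $t|_V$, but it amounts to redoing a substantial part of Conway--Coxeter. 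The paper's glide-symmetry argument avoids this entirely: positivity is only ever checked on $F$, where it is given.
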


\begin{proof}
Let $V$ denote the vertical line segment bounded by asterisks in
Figure \ref{fig:CC2} and suppose that it contains $n$ grid points.  By
\cite[thm.\ 3.1]{P} there is a map
\[
  D \ni \; ( i,j ) 
  \; \mapsto \; 
  u_{ ij } \; \in \BQ( x_1 , \ldots , x_n )
\]
with the following properties.
\begin{enumerate}

  \item  The values of $u$ are Laurent polynomials.

\smallskip

  \item  We have
$
  \left|
    \begin{array}{cc}
      u_{ ij } & u_{ i,j+1 } \\[2.5mm]
      u_{ i+1,j } & u_{ i+1,j+1 }
    \end{array}
  \right|
  = 1
$
when the determinant makes sense.

\smallskip

  \item  If $( i,j )$ is on one of the edges of $D$ then $u_{ ij } =
    1$. 

\smallskip

  \item  The values of $u$ on $V$ are $x_1, \ldots, x_n$.

\smallskip

  \item Restricting $u$ to $F$ gives a region from which $u$ can be
  recovered by the recipe of Figure \ref{fig:CC4}.

\end{enumerate}
Set the variables $x_i$ equal to the values of $t$ on $V$.  These
values are non-zero integers, so by (i) this gives a map
\[
  D \ni \; ( i,j )
  \; \mapsto \;
  v_{ ij } \; \in \BQ.
\]
Each determinant
\[
  \left|
    \begin{array}{cc}
      v_{ ij } & v_{ i,j+1 } \\[2.5mm]
      v_{ i+1,j } & v_{ i+1,j+1 }
    \end{array}
  \right|
\]
which makes sense is $1$ by (ii), and (iii) and (iv) imply that $t$
and $v$ are equal on $V$.  Hence $t$ and $v$ are equal on $F$, see
(10) in \cite{CC1} and \cite{CC2}.

The values of $t$ on $F$ are positive integers, so the same holds for
$v$.  By (v), all the values of $v$ are hence positive integers, so
$v$ is a friese with the property claimed in the lemma.
\end{proof}

\begin{Proposition}
\label{pro:CC2}
Let $t$ be an $\SL2$-tiling, $i \leq j$ and $p \leq q$ integers such
that $( i,j ) \neq ( p,q )$ and $t_{ jp } = t_{ iq } = 1$.

Then there is a friese as shown in Figure \ref{fig:CC1} which agrees
with $t$ on the rectangle $R = ( i \ldots j , p \ldots q )$, where we
use the notation
\[
    ( i \ldots j , p \ldots q )
    = \{\: ( x,y ) \in \BZ \times \BZ 
      \;|\; i \leq x \leq j,\: p \leq y \leq q \:\}.
\]
\begin{figure}
  \centering
\[
  \xymatrix @-2.75pc @! {
      & & & & & \dddots \\
      & & & & & & 1 & & & & & & & \\
      & & & & & & & 1 & & & & & & \\
      & & & & & & & & 1 & & & & & \\
      & & & & t_{ ip } & \cdots & \cdots & \cdots & \cdots & t_{ iq } & & & & \\
      & & & & \vdots & & & & & \vdots & 1 & & & & \\
      \dddots & & & & \vdots & & & & & \vdots & & 1 & & & \\
      & 1 & & & \vdots & & 
                             {}\save[]+<0.25cm,0cm>*\txt<8pc>{R} \restore 
                           & & & \vdots & & & 1 & \\
      & & 1 & & \vdots & & & & & \vdots & & & & \dddots \\
      & & & 1 & \vdots & & & & & \vdots & & & & & \\
      & & & & t_{ jp } & \cdots & \cdots & \cdots & \cdots & t_{ jq } & & & & \\
      & & & & & 1 & & & & & & & & \\
      & & & & & & 1 & & & & & & & \\
      & & & & & & & 1 & & & & & & \\
      & & & & & & & & \dddots
    }
\]
  \caption{$t$ is an $\SL2$-tiling with $t_{ jp } = t_{ iq } = 1$.
    There is a friese agreeing with $t$ on the rectangle $R$ by
    Proposition \ref{pro:CC2}}
\label{fig:CC1}
\end{figure}
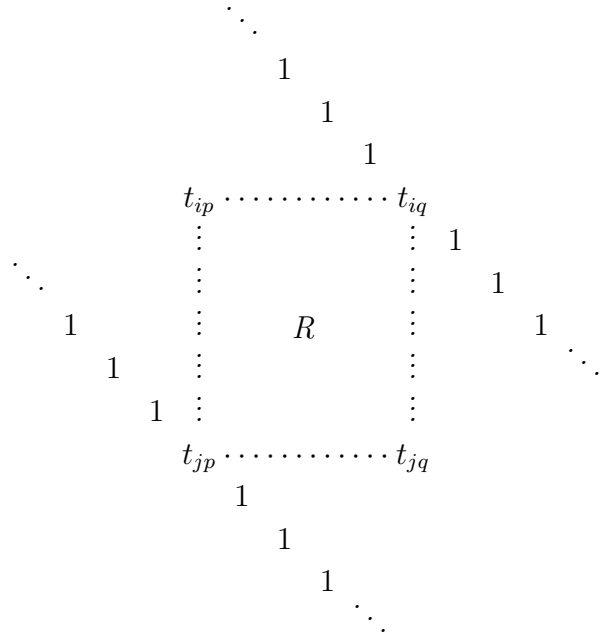
\end{Proposition}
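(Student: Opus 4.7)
The plan is to apply Lemma \ref{lem:CC1}. I would work with the diagonal band $D = \{(x, y) \in \BZ \times \BZ : p - j \leq y - x \leq q - i\}$, whose lower-left edge passes through $(j, p)$ and upper-right edge through $(i, q)$. Inside $D$, I would take $F$ to be the right triangle with vertices $(i, p + i - j)$, $(i, q)$, and $(j + q - p, q)$: its hypotenuse lies on the lower-left edge of $D$ and passes through $(j, p)$, its right-angle vertex (apex) is $(i, q)$ on the upper-right edge of $D$, and a direct check shows $R \subseteq F$.

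On $F$ I would define a candidate partial $\SL2$-tiling $s$ by setting $s = t$ on $R$, $s \equiv 1$ along the hypotenuse of $F$ (consistent with $t_{jp} = 1$ at the unique common point $(j, p)$), and $s(i, q) = 1$ at the apex (consistent with $t_{iq} = 1$). The complement $F \setminus (R \cup \text{hypotenuse})$ decomposes into two triangular regions $F_1$ (to the left of $R$) and $F_2$ (below $R$), and I would extend $s$ to each via the determinantal recursion
\[
s(x, y)\, s(x+1, y+1) = 1 + s(x, y+1)\, s(x+1, y),
\]
working column-by-column from column $p$ of $R$ toward the NW part of the hypotenuse on $F_1$, and row-by-row from row $j$ of $R$ toward the SE part on $F_2$.

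The main obstacle will be to verify that the extended $s$ takes positive integer values on $F_1$ and $F_2$. I would prove this by induction on the distance from $R$, using the Ptolemy identities of Section \ref{sec:properties1}. Specifically, Proposition \ref{pro:Ptolemy3} combined with $t_{jp} = 1$ gives the factorisation $t_{xy} - t_{xp}\, t_{jy} = c_{xj}\, d_{yp}$ for $x < j$ and $y < p$, and Proposition \ref{pro:cd} ensures positivity of $c_{xj}$ and $d_{yp}$; these are the ingredients I expect to use to control divisibility in the recursion on $F_1$, with a symmetric argument for $F_2$ based on $t_{iq} = 1$.

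Once $s$ is established as a valid partial $\SL2$-tiling on $F$ with $1$'s along the hypotenuse and at the apex, Lemma \ref{lem:CC1} will produce a friese on $D$ agreeing with $s$ on $F$, and therefore with $t$ on $R$, as required.
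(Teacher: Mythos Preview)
Your structural plan is exactly the paper's: the same band $D$, the same triangle $F$ with apex $(i,q)$ and hypotenuse through $(j,p)$, and the same appeal to Lemma~\ref{lem:CC1}. The only difference is how the two flanking triangles $F_1,F_2$ are filled. The paper writes the extension down in closed form: on $F_1$ one places the numbers $c_{x,m}$ from Definition~\ref{def:cd} (concretely $s(x,y)=c_{x,\,j+1+y-p}$), and on $F_2$ the numbers $d_{m,y}$; see Figure~\ref{fig:CC3}. Positivity and integrality are then immediate from Proposition~\ref{pro:cd}, and the adjacent $2\times 2$ determinants equal $1$ by Propositions~\ref{pro:Ptolemy1} and~\ref{pro:Ptolemy2}. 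No recursion or divisibility bookkeeping is needed.

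Your recursive filling would in fact produce these very same values (the boundary data on column $p$ and on the hypotenuse, together with the determinant recursion, pin them down uniquely), so your approach is not wrong. But the tool you single out for the ``main obstacle'' does not do the job as stated: the factorisation $t_{xy}-t_{xp}t_{jy}=c_{xj}d_{yp}$ from Proposition~\ref{pro:Ptolemy3} is an identity about the original tiling $t$ at points $(x,y)$ with $y<p$, i.e.\ outside $R$, and has no a~priori relation to the function $s$ you are building on $F_1$. It gives you neither the value of $s(x,y)$ nor control over the denominators in your recursion. The clean way to close the argument is simply to prove by induction that your recursion yields $s(x,y)=c_{x,\,j+1+y-p}$ on $F_1$ (and the analogous $d$-formula on $F_2$), after which Proposition~\ref{pro:cd} gives positive integrality for free. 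At that point you have reproduced the paper's proof.
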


\begin{proof}
By Lemma \ref{lem:CC1}, all we need is to take the restriction of $t$
to $R$ and extend it to a partial $\SL2$-tiling, which is defined on a
triangle $F$ as in Figure \ref{fig:CC2} and has value $1$ on the base
and at the apex of $F$.  We do so explicitly in Figure \ref{fig:CC3}.
\begin{figure}
  \centering
\[
  \xymatrix @-3.75pc @! {
      c_{ i,i+1 } & \cdots & \cdots & \cdots & c_{ i,j-1 } & c_{ ij } & t_{ ip } & \cdots & \cdots & \cdots & \cdots & t_{ iq } \\
      & \dddots & & & \vdots & \vdots & \vdots & & & & & \vdots \\
      & & \dddots & & \vdots & \vdots & \vdots & & & & & \vdots \\
      & & & \dddots & \vdots & \vdots & \vdots & & & & & \vdots \\
      & & & & c_{ j-2,j-1 } \: & \: c_{ j-2,j } & t_{ j-2,p } & & & & & \vdots \\
      & & & & & c_{ j-1,j } & t_{ j-1,p } & & & & & \vdots \\
      & & & & & & t_{ jp } & t_{ j,p+1 } & t_{ j,p+2 } & \cdots & \cdots & t_{ jq } \\
      & & & & & & & d_{ p,p+1 } \: & \: d_{ p,p+2 } & \cdots & \cdots & d_{ pq } \\
      & & & & & & & & \: d_{ p+1,p+2 } \: & \cdots & \cdots & d_{ p+1,q } \\
      & & & & & & & & & \dddots & & \vdots \\
      & & & & & & & & & & \dddots & \vdots \\
      & & & & & & & & & & & d_{ q-1,q }
    }
\]
  \caption{Restricting $t$ to the rectangle $R = ( i \ldots j , p
    \ldots q )$, then extending it to a triangle}
\label{fig:CC3}
\end{figure}
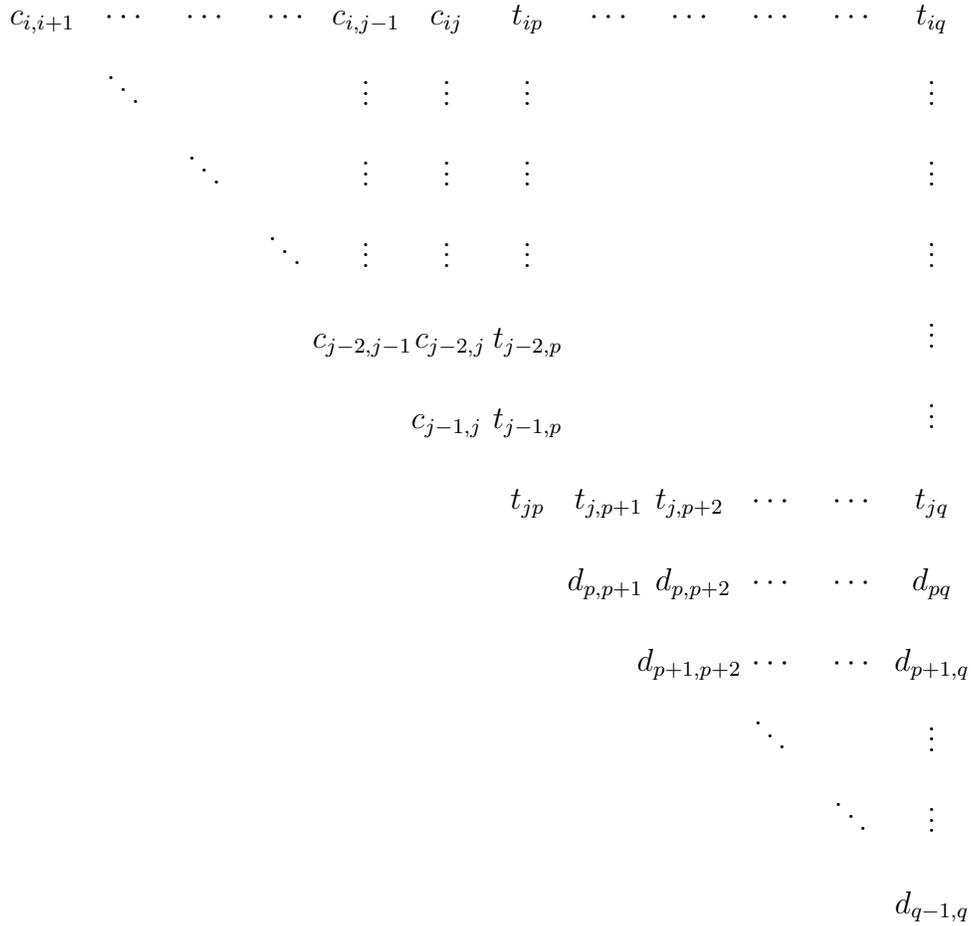
The extension is accomplished by filling in two smaller triangles
adjacent to $R$ by using $c$ and $d$ from Definition \ref{def:cd}.

The extension is a partial $\SL2$-tiling because it consists of
positive integers by Proposition \ref{pro:cd}, and because all
resulting adjacent $2 \times 2$--submatrices which make sense have
determinant equal to $1$.  The last claim follows from Propositions
\ref{pro:Ptolemy1} and \ref{pro:Ptolemy2}.  It is elementary but
tedious to check this and we omit the details.
\end{proof}

\section{Properties of $\SL2$-tilings IV: Zig-zag paths}
\label{sec:properties4}

This section draws on the results of the two previous sections to show
that if $t$ is an $\SL2$-tiling with enough ones, then there is a
zig-zag path in the plane which contains all the $( i,j )$ with $t_{
  ij } = 1$.  The terminology is made precise in Proposition
\ref{pro:zigzag}.

The following proposition uses the notation for quadrants exemplified
by Equation \eqref{equ:quadrant} in the introduction, and a similar
notation for half lines, for instance
\[
  ( <j , p ) = \{\: (x,y) \in \BZ \times \BZ \;|\; x < j,\: y = p \:\}.
\]

\begin{Proposition}
\label{pro:one2}
Let $t$ be an $\SL2$-tiling with $t_{ jp } = 1$.
\begin{enumerate}

  \item If $t$ has the value $1$ somewhere in the quadrant $( < j , > p )$,
  then it also has the value $1$ somewhere on the half line $( < j , p )$
  or somewhere on the half line $( j , > p )$, but not both.

\smallskip

  \item If $t$ has the value $1$ somewhere in the quadrant $( > j , < p )$,
  then it also has the value $1$ somewhere on the half line $( > j , p )$
  or somewhere on the half line $( j , < p )$, but not both.

\end{enumerate}
\end{Proposition}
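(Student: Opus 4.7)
The plan is to treat (i) first---handling \emph{not both} and \emph{at least one} separately---and then deduce (ii) by the symmetry $(x,y)\mapsto(-x,-y)$, which turns the quadrant $(>j,<p)$ into a quadrant of type $(<j,>p)$ for the relabelled tiling and preserves the $\SL_2$-tiling axiom.

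For \emph{not both}, I would argue by contradiction. If both half-lines carried a $1$, say $t_{x_1,p}=1$ with $x_1<j$ and $t_{j,y_1}=1$ with $y_1>p$, then $(x_1,p)$ would sit in the quadrant $(<j,<y_1)$, which is forbidden by Proposition \ref{pro:one1} applied to $t_{j,y_1}=1$.

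For \emph{at least one}, given a $1$ of $t$ at some $(x_0,y_0)\in(<j,>p)$, I would apply Proposition \ref{pro:CC2} to the pair $t_{jp}=t_{x_0,y_0}=1$ (taking $i=x_0$, $q=y_0$ in the notation of that proposition), to obtain a friese agreeing with $t$ on $R=(x_0\ldots j,p\ldots y_0)$. By Remark \ref{rmk:BCI} this friese comes from a triangulation $\fX$ of a polygon $P$, and its $1$s correspond precisely to the edges of $P$; moreover two friese $1$s share a row (resp.\ column) exactly when the two corresponding polygon edges share a first (resp.\ second) vertex. Thus the friese $1$s form a cycle (a combinatorial zig-zag) in which consecutive entries share a row or column. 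The $1$s at $(j,p)$ and $(x_0,y_0)$ sit at two distinct points of this cycle, and traversing it from $(j,p)$ to $(x_0,y_0)$ in the direction that heads \emph{into} $R$ produces a chain of $1$s that stays inside $R$ and moves monotonically NE-ward. The first step of that chain lands at a friese $1$ of the form $(j,y_1)$ with $p<y_1\le y_0$ or $(x_1,p)$ with $x_0\le x_1<j$, and the agreement of the friese with $t$ on $R$ turns this into the desired $1$ of $t$ on the required half-line. Part (ii) then follows from the symmetry above.

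The main obstacle will be justifying the geometric claim that one of the two traversal directions around the polygon-edge cycle produces a monotone NE chain of $1$s that remains inside $R$. This requires carefully unpacking the friese--triangulation correspondence of \cite{CC1,CC2} (used via \cite{BCI}), in particular the exact placement of the polygon-edge $1$s in the friese diagram relative to the two boundary diagonals through $(j,p)$ and $(x_0,y_0)$. If that turns out to be awkward, a fallback is strong induction on $(j-x_0)+(y_0-p)$: the base case $x_0=j-1$, $y_0=p+1$ is immediate, since Equation \eqref{equ:determinant1} combined with $t_{jp}=t_{j-1,p+1}=1$ forces $t_{j-1,p}\cdot t_{j,p+1}=2$, so that one of these two entries equals $1$; in the inductive step one would hope to replace $(x_0,y_0)$ with a $1$ strictly closer to $(j,p)$ in $L^1$-distance, by exploiting a Ptolemy identity from Proposition \ref{pro:Ptolemy2} or \ref{pro:Ptolemy3}.
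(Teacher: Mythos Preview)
Your ``not both'' argument via Proposition~\ref{pro:one1} is fine and matches the paper. The set-up for ``at least one'' via Proposition~\ref{pro:CC2} is also the right move. But the combinatorial claim driving your main argument is false: in the friese attached to a triangulation $\fX$ of $P$, the entries equal to $1$ correspond not to the \emph{edges} of $P$ but to \emph{all} arcs of $\fX$ (edges and diagonals); see \cite[(32)]{CC2}, quoted in the paper just before Equation~\eqref{equ:occurrences}. So the friese $1$s do not form a single cycle, the ``monotone NE chain'' picture collapses, and the argument as written does not go through. Your induction fallback also remains a sketch: the Ptolemy identities do not obviously produce a strictly closer $1$ in the inductive step.

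The paper closes the gap differently. It argues by contradiction: assuming no $1$ lies on either half-line, it first chooses $(i,q)\in(<j,>p)$ with $t_{iq}=1$ \emph{as close as possible} to $(j,p)$, which together with the assumption and Proposition~\ref{pro:one1} forces $t_{jp}$ and $t_{iq}$ to be the \emph{only} $1$s in the rectangle $R$. Then, in the friese obtained from Proposition~\ref{pro:CC2}, the polygon diagonal corresponding to the corner position $(i,p)$ crosses exactly the diagonals indexed by a certain sub-box of $R$ (Figure~\ref{fig:CC5}); since none of those entries are $1$, none of those diagonals lie in $\fT_P$, and maximality of the triangulation then forces the diagonal for $(i,p)$ itself to lie in $\fT_P$, giving $t_{ip}=1$ --- a contradiction. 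The two ingredients you are missing are thus (a) passing to the \emph{closest} $1$ to control the $1$-pattern in $R$, and (b) the maximality-of-triangulations step in place of a cycle-tracing argument.
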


\begin{proof}
We only prove (i) as (ii) has an analogous proof.

The last part of (i) (``not both'') is immediate from Proposition
\ref{pro:one1}.

To show the first part of (i), assume that it fails.  Then we have
that $t_{ jp } = 1$, there is $( i,q ) \in ( < j , > p )$ with $t_{ iq
} = 1$, and $t$ is different from $1$ on $( < j , p )$ and on $( j , >
p )$.

If we choose $( i,q ) \in ( < j , > p )$ as close as possible to $ (
j,p )$ then
\begin{equation}
\label{equ:occurrences}
  \mbox{ $t_{ jp } = t_{ iq } = 1$ are the only occurrences of $1$ in
         the rectangle $R = ( i \ldots j , p \ldots q )$. }
\end{equation}
By Proposition \ref{pro:CC2} there is a friese $v$ which agrees with
$t$ on $R$, as shown in Figure \ref{fig:CC1}.  The friese
corresponds to a triangulation $\fT_P$ of a finite polygon $P$, see
Remark \ref{rmk:BCI}.  Consider the triangle $F$ such that
the restriction of $v$ to $F$ is the fundamental region shown in
Figure \ref{fig:CC5}.
\begin{figure}
  \centering
\[
  \xymatrix @-2.75pc @! {
      1 & \ast & \cdots & \cdots & \cdots & \ast & t_{ ip } & t_{ i,p+1 } & \cdots & \cdots & \cdots & t_{ iq } \\
      & 1 & \dddots & & & \ast & t_{ i+1,p } & t_{ i+1,p+1 } & \cdots & \cdots & \cdots & t_{ i+1,q } \\
      & & 1 & \dddots & & \vdots & \vdots & \vdots & & & & \vdots \\
      & & & 1 & \dddots & \vdots & \vdots & \vdots & & & & \vdots \\
      & & & & 1 & \ast & \vdots & \vdots & & & & \vdots \\
      & & & & & 1 & \vdots & \vdots & & & & \vdots \\
      & & & & & & t_{ jp } & t_{ j,p+1 } & \cdots & \cdots & \cdots & \;\: t_{ jq } \:\; \\
      & & & & & & & 1 & \ast & \cdots & \cdots & \ast \\
      & & & & & & & & 1 & \dddots & & \vdots \\
      & & & & & & & & & 1 & \dddots & \vdots \\
      & & & & & & & & & & 1 & \ast \\
      & & & & & & & & & & & 1
\save "2,8"."7,12"*[F.]\frm{} \restore
    }
\]
  \caption{A fundamental region}
\label{fig:CC5}
\end{figure}
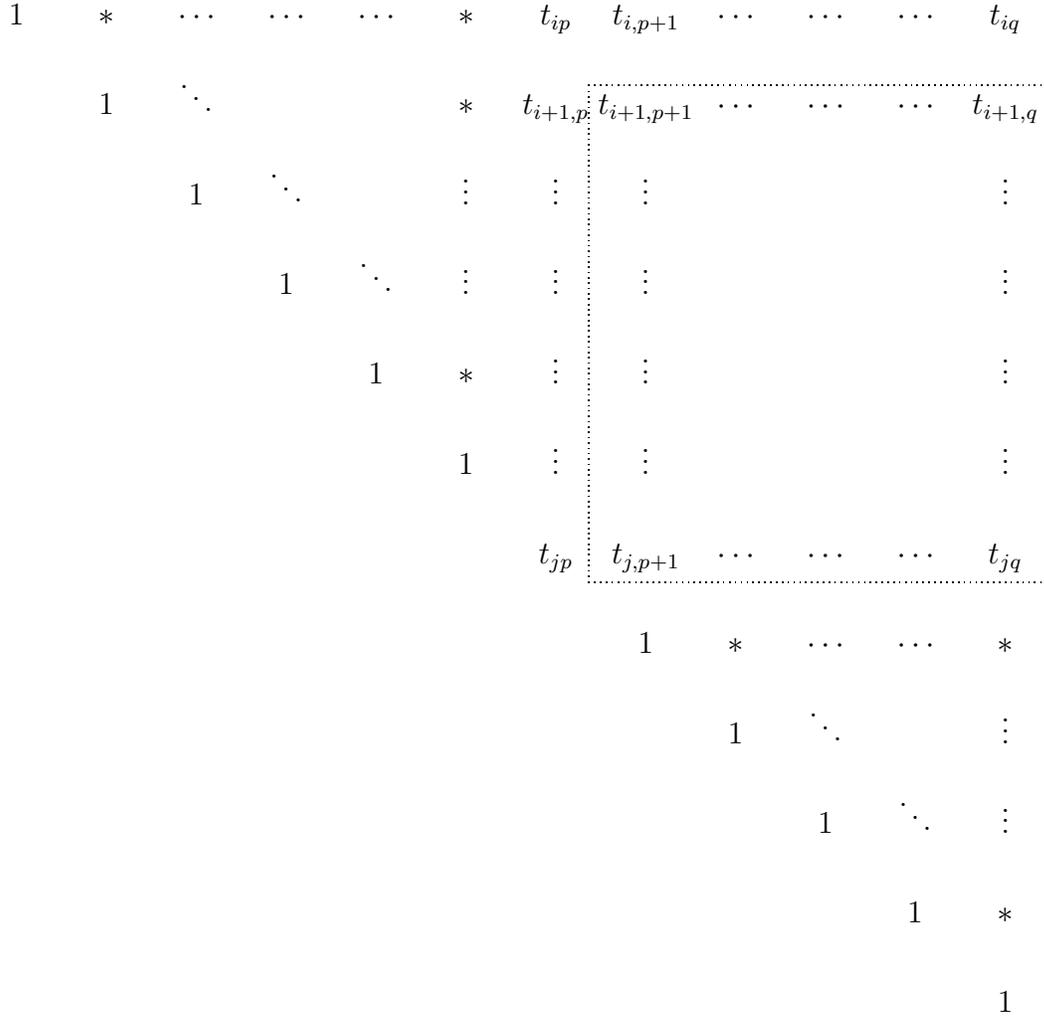
There is a bijective correspondence between diagonals in $P$ and grid
points in $F$, see \cite[p.\ 172]{BCI}.  Note that in this context,
the edges of $P$ are considered to be diagonals and they correspond to
the grid points along the base and at the apex of $F$.  Moreover, the
diagonal corresponding to $( i,p )$ crosses precisely the diagonals
corresponding to the grid points inside the box in Figure
\ref{fig:CC5}.

For $( x,y ) \in F$ we have that $t_{ xy } = 1$ if and only if the
diagonal corresponding to $( x,y )$ is in $\fT_P$, see
\cite[(32)]{CC2}.  By Equation \eqref{equ:occurrences}, none of the
$t_{ xy }$ in the box are $1$, so none of the corresponding diagonals
are in $\fT_P$.  Hence the diagonal corresponding to $( i,p )$ must be
in $\fT_P$ whence $t_{ ip } = 1$, but this contradicts Equation
\eqref{equ:occurrences}.
\end{proof}

\begin{Proposition}
\label{pro:zigzag}
Let $t$ be an $\SL2$-tiling with enough ones.  There exist $( x_{
\alpha } , y_{ \alpha } ) \in \BZ \times \BZ$ for $\alpha \in \BZ$
with the following properties.
\begin{enumerate}

  \item  $t_{ xy } = 1 \Leftrightarrow ( x,y ) = ( x_{ \alpha} , y_{
      \alpha } )$ for some $\alpha$.

\smallskip

  \item  For each $\alpha$, either

\begin{enumerate}

  \item  $x_{ \alpha + 1 } < x_{ \alpha }$ and $y_{ \alpha + 1 } = y_{
      \alpha }$, or

  \item  $x_{ \alpha + 1 } = x_{ \alpha }$ and $y_{ \alpha + 1 } > y_{
      \alpha }$. 

\end{enumerate}

\smallskip

  \item  When $\alpha$ goes to $\infty$ or $-\infty$, there are
    infinitely many shifts between options {\rm (a)} and {\rm (b)}.

\end{enumerate}
The $( x_{ \alpha } , y_{ \alpha } )$ are unique with these
properties, up to adding a constant integer to $\alpha$.
\end{Proposition}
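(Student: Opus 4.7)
The plan is to construct $(x_\alpha, y_\alpha)$ explicitly by picking any $1$ of $t$ as $(x_0, y_0)$---such exists because $t$ has enough ones---and extending the indexing inductively in both directions, using Propositions \ref{pro:one1}, \ref{pro:one2}, and \ref{pro:finite}. Then I verify (i)--(iii) and uniqueness, the main effort being (i).

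For the construction, assume $(x_\alpha, y_\alpha)$ with $t_{x_\alpha y_\alpha} = 1$ has been defined. Enough ones gives a $1$ in the quadrant $(<x_\alpha, >y_\alpha)$, so Proposition \ref{pro:one2}(i) provides a $1$ on exactly one of the half-lines $(<x_\alpha, y_\alpha)$ or $(x_\alpha, >y_\alpha)$. Since Proposition \ref{pro:finite} bounds the number of $1$'s in each row and column, I can pick the \emph{closest} such $1$: in the first case, $x_{\alpha+1}$ is the largest $x < x_\alpha$ with $t_{x y_\alpha} = 1$ and $y_{\alpha+1} = y_\alpha$ (option (a)); in the second, $y_{\alpha+1}$ is the smallest $y > y_\alpha$ with $t_{x_\alpha y} = 1$ and $x_{\alpha+1} = x_\alpha$ (option (b)). The backward direction is defined symmetrically via Proposition \ref{pro:one2}(ii) on the quadrant $(>x_\alpha, <y_\alpha)$. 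Property (ii) is built in. For (iii), a maximal run of (a)-steps keeps $y$ fixed and strictly decreases $x$ through distinct $1$'s in a single column, so Proposition \ref{pro:finite} bounds its length; the same applies to (b)-runs. Consequently both (a) and (b) occur infinitely often in each direction, yielding infinitely many shifts; moreover $x_\alpha \to -\infty$ and $y_\alpha \to +\infty$ as $\alpha \to +\infty$, and dually as $\alpha \to -\infty$.

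Property (i) is the main obstacle. Suppose $t_{x' y'} = 1$ with $(x', y') \neq (x_0, y_0)$. Proposition \ref{pro:one1} at $(x_0, y_0)$ places $(x', y')$ in one of the closed regions $\{x \leq x_0,\, y \geq y_0\} \setminus \{(x_0, y_0)\}$ or $\{x \geq x_0,\, y \leq y_0\} \setminus \{(x_0, y_0)\}$; by symmetry assume the first. I prove by induction on $\alpha \geq 0$ that either $(x', y') = (x_\beta, y_\beta)$ for some $\beta \leq \alpha$, or $x' \leq x_\alpha$ and $y' \geq y_\alpha$. Assume the latter with $(x', y') \neq (x_\alpha, y_\alpha)$ and split by whether step $\alpha \to \alpha + 1$ is (a) or (b). In case (a): if $y' = y_\alpha$ then $(x', y_\alpha)$ is a $1$ in $(<x_\alpha, y_\alpha)$, so the closest-$1$ choice gives $x_{\alpha+1} \geq x'$, and either $x_{\alpha+1} = x'$ (done) or the induction continues with $x' < x_{\alpha+1}$, $y' = y_{\alpha+1}$; if $y' > y_\alpha$ then $x_{\alpha+1} < x'$ would place $(x', y')$ in $(>x_{\alpha+1}, >y_{\alpha+1})$, contradicting Proposition \ref{pro:one1} at $(x_{\alpha+1}, y_{\alpha+1})$, hence $x' \leq x_{\alpha+1}$ and $y' > y_{\alpha+1}$. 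Case (b) is symmetric. Since $x_\alpha \to -\infty$, the alternative $x' \leq x_\alpha$ cannot persist indefinitely, so the sequence reaches $(x', y')$ in finitely many steps.

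Uniqueness up to a constant shift of $\alpha$ is then immediate, since the construction is entirely forced: given $(x_\alpha, y_\alpha)$, Proposition \ref{pro:one2}(i) determines whether the next step is (a) or (b), and property (i) together with the closest-$1$ rule then pins down $(x_{\alpha+1}, y_{\alpha+1})$, so any two sequences satisfying (i)--(iii) differ only by adding a constant integer to $\alpha$.
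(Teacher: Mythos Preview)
Your proof is correct and follows essentially the same approach as the paper: the construction of the sequence via Proposition~\ref{pro:one2} and the closest-$1$ rule is identical, and the arguments for (ii), (iii), and uniqueness are the same. The only difference is in the $\Rightarrow$ direction of (i): the paper argues globally that the $1$'s at the corners of the zig-zag path block, via Proposition~\ref{pro:one1}, a family of quadrants whose union covers the plane minus the path, whereas you give an equivalent local induction showing that any given $1$ is eventually reached by the sequence. Both arguments rest on the same two ingredients (Proposition~\ref{pro:one1} and the closest-$1$ property), so this is a difference of presentation rather than of method. One small remark: invoking Proposition~\ref{pro:finite} to justify the existence of the closest $1$ is unnecessary, since the relevant set of indices is bounded above (respectively below) by $x_\alpha - 1$ (respectively $y_\alpha + 1$); the proposition is genuinely needed only for (iii).
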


\begin{proof}
Uniqueness is straightforward so we show existence.  Pick $( x_0 , y_0
)$ with $t( x_0 , y_0 ) = 1$.  Now suppose that $( x_{ \alpha } ,
y_{ \alpha } )$ have been defined for $| \alpha | \leq A$
such that $t( x_{ \alpha } , y_{ \alpha } ) = 1$ for each $\alpha$.

To define $( x_{ A+1 } , y_{ A+1 } )$, note that $t( x_A , y_A ) = 1$
and that $t$ has the value $1$ somewhere in the quadrant $( < x_A , >
y_A )$ by Definition \ref{def:tiling}.  Proposition \ref{pro:one2}(1)
says that either
\renewcommand{\labelenumi}{(\arabic{enumi})}
\begin{enumerate}

  \item  $t$ has the value $1$ somewhere on the half line
         $( < x_A , y_A )$ or 

\smallskip

  \item  $t$ has the value $1$ somewhere on the half line
         $( x_A , > y_A )$,

\end{enumerate}
\renewcommand{\labelenumi}{\roman{enumi}}
but not both.

If (1) occurs then let $x_{ A+1 }$ be maximal such that $x_{ A+1 } <
x_A$ and $t( x_{ A+1 } , y_A ) = 1$.  Set $y_{ A+1 } = y_A$.

If (2) occurs then let $y_{ A+1 }$ be minimal such that $y_{ A+1 } >
y_A$ and $t( x_A , y_{ A+1 } ) = 1$.  Set $x_{ A+1 } = x_A$.

To define $( x_{ -A-1 } , y_{ -A-1 } )$, use an analogous method based
on Proposition \ref{pro:one2}(2).

Now consider properties (i)--(iii) in the lemma.  The definition of
the $( x_{ \alpha } , y_{ \alpha } )$ makes it clear that they satisfy
(ii) and $\Leftarrow$ in (i).  Property (iii) also holds, for if it
failed then $t$ would contradict Proposition \ref{pro:finite}.

To see $\Rightarrow$ in property (i), note that by (iii), the $( x_{
  \alpha } , y_{ \alpha } )$ define an infinite zig-zag path in the
plane, see Figure \ref{fig:zigzag}.
\begin{figure}
  \centering
  \begin{tikzpicture}

    \tikzset{help lines/.style={color=black!15,very thin}}
    \draw[help lines] (-8,-8) grid (8,8);

    \draw [blue!30, line width=14, dashed] (-5,-7) -- (-5,-6);
    \draw [rounded corners, blue!30, line width=14] (-5,-6) -- (-5,-5)
    -- (-2,-5) -- (-2,-1) -- (-2,1) -- (0,1) -- (5,1) -- (5,5) -- (6,5);
    \draw [blue!30, line width=14, dashed] (6,5) -- (7,5);

    \draw (-5,-5) node {$1$};
    \draw (-2,-5) node {$1$};
    \draw (-2,-1) node {$1$};
    \draw (-2,1) node {$1$};
    \draw (0,1) node {$1$};
    \draw (5,1) node {$1$};
    \draw (5,5) node {$1$};

    \draw [dashed, red] (-4.5,-8) -- (-4.5,-7);
    \draw [rounded corners, red] (-4.5,-7) -- (-4.5,-5.6) -- (7,-5.6);
    \draw [dashed, red] (7,-5.6) -- (8,-5.6);

    \draw [dashed, red] (-8,-4.6) -- (-7,-4.6);
    \draw [rounded corners, red] (-7,-4.6) -- (-5.5,-4.6) -- (-5.5,7);
    \draw [dashed, red] (-5.5,7) -- (-5.5,8);

    \draw [dashed, red] (-1.6,-8) -- (-1.6,-7);
    \draw [rounded corners, red] (-1.6,-7) -- (-1.6,-5.4) -- (7,-5.4);
    \draw [dashed, red] (7,-5.4) -- (8,-5.4);

    \draw [dashed, red] (-8,-4.4) -- (-7,-4.4);
    \draw [rounded corners, red] (-7,-4.4) -- (-2.6,-4.4) -- (-2.6,7);
    \draw [dashed, red] (-2.6,7) -- (-2.6,8);

    \draw [dashed, red] (-1.4,-8) -- (-1.4,-7);
    \draw [rounded corners, red] (-1.4,-7) -- (-1.4,0.4) -- (7,0.4);
    \draw [dashed, red] (7,0.4) -- (8,0.4);

    \draw [dashed, red] (-8,1.4) -- (-7,1.4);
    \draw [rounded corners, red] (-7,1.4) -- (-2.4,1.4) -- (-2.4,7);
    \draw [dashed, red] (-2.4,7) -- (-2.4,8);

    \draw [dashed, red] (5.4,-8) -- (5.4,-7);
    \draw [rounded corners, red] (5.4,-7) -- (5.4,0.6) -- (7,0.6);
    \draw [dashed, red] (7,0.6) -- (8,0.6);

    \draw [dashed, red] (-8,1.6) -- (-7,1.6);
    \draw [rounded corners, red] (-7,1.6) -- (4.4,1.6) -- (4.4,7);
    \draw [dashed, red] (4.4,7) -- (4.4,8);

    \draw [dashed, red] (5.6,-8) -- (5.6,-7);
    \draw [rounded corners, red] (5.6,-7) -- (5.6,4.5) -- (7,4.5);
    \draw [dashed, red] (7,4.5) -- (8,4.5);

    \draw [dashed, red] (-8,5.5) -- (-7,5.5);
    \draw [rounded corners, red] (-7,5.5) -- (4.6,5.5) -- (4.6,7);
    \draw [dashed, red] (4.6,7) -- (4.6,8);

  \end{tikzpicture} 
  \caption{The values $1$ on the zig-zag path block the value $1$
    outside the path}
\label{fig:zigzag}
\end{figure}
At each corner of the zig-zag path, $t$ has the value $1$.  Each such
corner prevents $t$ from having the value $1$ in two whole quadrants
by Proposition \ref{pro:one1}; these quadrants are also shown in
Figure \ref{fig:zigzag}.  Between them, the quadrants cover the whole
plane except for the zig-zag path, so $t_{ xy } = 1$ implies that $(
x,y )$ is on the zig-zag path.  In fact, we must even have $( x,y ) =
( x_{ \alpha } , y_{ \alpha } )$ for some $\alpha$, as claimed.
Namely, the construction shows that going from $( x_A , y_A )$ to $(
x_{ A+1 } , y_{ A+1 } )$ is the same as going to the ``next'' place on
the zig-zag path where $t$ has the value $1$.  So the only $( x,y )$
on the zig-zag path with $t_{ xy } = 1$ are the $( x_{ \alpha } , y_{
  \alpha } )$.
\end{proof}

\section{From $\SL2$-tilings to triangulations of the strip}
\label{sec:Psi}

\begin{Construction}
\label{con:Psi}
Let $t$ be an $\SL2$-tiling with enough ones.  We construct a
triangulation of the strip,
\[
  \fT = \Psi( t ),
\]
as follows.

Consider the $( x_{ \alpha } , y_{ \alpha } )$ for $\alpha \in \BZ$
from Proposition \ref{pro:zigzag}, and start by including the
connecting arcs $\big( ( x_{ \alpha } )^{ \circ } , ( y_{ \alpha } )_{
  \circ } \big)$ in $\fT$.  They are pairwise non-crossing by
Proposition \ref{pro:zigzag}(ii), and Proposition
\ref{pro:zigzag}(iii) implies that $\fT$ will satisfy the last part of
Definition \ref{def:triangulation}.

We complete the construction of $\fT$ by including further arcs
for each value of $\alpha \in \BZ$ as follows:

Consider an $\alpha$.  Suppose that $x_{ \alpha+1 } < x_{ \alpha }$
and $y_{ \alpha + 1 } = y_{ \alpha }$ as in Proposition
\ref{pro:zigzag}(ii)(a); the alternative case $x_{ \alpha+1 } = x_{
  \alpha }$ and $y_{ \alpha + 1 } > y_{ \alpha }$ is handled
analogously.  Figure \ref{fig:triangulation5} shows part of $\fT$ as
constructed so far.
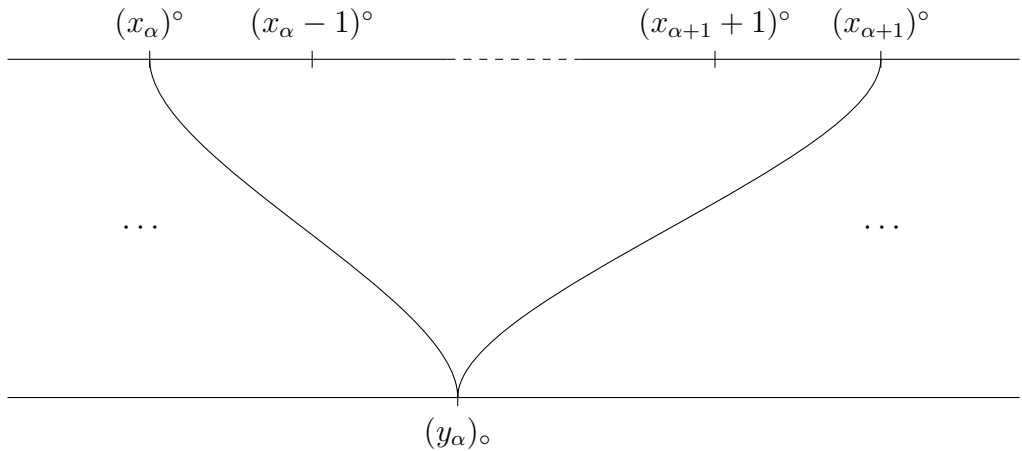
\begin{figure}[H]
  \centering
  \begin{tikzpicture}[xscale=4.50,yscale=2.25]

    \path (-0.93,0) node{$\cdots$};
    \path (1.26,0) node{$\cdots$};

    \draw (-1.33,1) -- (-0.03,1);
    \draw [dashed] (-0.03,1) -- (0.36,1);
    \draw (0.36,1) -- (1.66,1);

    \draw (-1.33,-1) -- (1.66,-1);

    \draw (-0.91,0.95) -- (-0.91,1.05) node[anchor=south]{$( x_{ \alpha } )^{ \circ }$};
    \draw (-0.43,0.95) -- (-0.43,1.05) node[anchor=south]{$( x_{ \alpha } -1)^{ \circ }$};
    \draw (0.76,0.95) -- (0.76,1.05) node[anchor=south]{$( x_{ \alpha+1 } +1)^{ \circ }$};
    \draw (1.25,0.95) -- (1.25,1.05) node[anchor=south]{$( x_{ \alpha+1 } )^{ \circ }$};
    \draw (0.00,-0.95) -- (0.00,-1.05) node[anchor=north]{$( y_{ \alpha } )_{ \circ }$};

    \draw (0.00,-1) .. controls (0.00,-0.4) and (-0.91,0.4) .. (-0.91,1);
    \draw (0.00,-1) .. controls (0.00,-0.4) and (1.25,0.4) .. (1.25,1);

  \end{tikzpicture} 
  \caption{A part of $\fT$ as constructed so far}
\label{fig:triangulation5}
\end{figure}
This part of $\fT$ can be viewed as a finite polygon $P$ with vertices
\begin{equation}
\label{equ:vertices}
  ( x_{ \alpha+1 } )^{ \circ }
  \; , \;
  ( x_{ \alpha+1 } + 1 )^{ \circ }
  \; , \;
  \ldots
  \; , \;
  ( x_{ \alpha } - 1 )^{ \circ }
  \; , \;
  ( x_{ \alpha } )^{ \circ }
  \; , \;
  ( y_{ \alpha } )_{ \circ }
\end{equation}
and we will construct a triangulation $\fT_P$ of $P$ whose diagonals
can be viewed as arcs which will be included in $\fT$. 

To construct $\fT_P$, note $t( x_{ \alpha } , y_{ \alpha } ) = 1$ and
$t( x_{ \alpha + 1 } , y_{ \alpha } ) = t( x_{ \alpha+1 } , y_{
  \alpha+1 } ) = 1$.  So Proposition \ref{pro:CC2} says that there is
a friese $v$ on a diagonal band $D$ such that on the line segment
\[
  ( x_{ \alpha+1 } \ldots x_{ \alpha } , y_{ \alpha } )
  = \{\: ( x,y ) \in \BZ \times \BZ
    \;|\; x_{ \alpha+1 } \leq x \leq x_{ \alpha },\: y = y_{ \alpha } \:\},
\]
$v$ has the following values.
\[
  t( x_{ \alpha+1 } , y_{ \alpha } )
  \; , \;  \ldots \; , \;
  t( x_{ \alpha } , y_{ \alpha } )
\]
This is shown in Figure \ref{fig:CC6}.
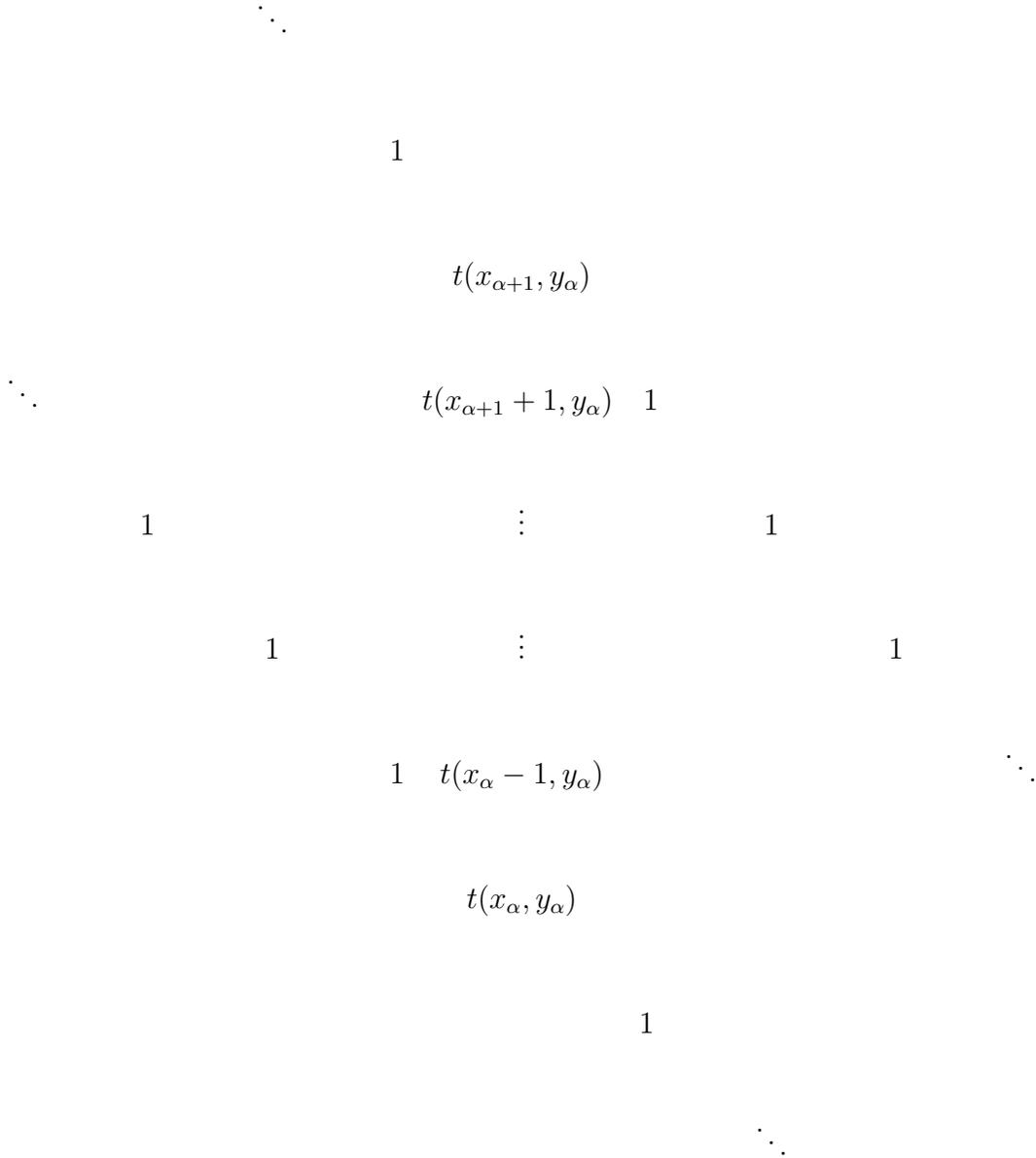
\begin{figure}
  \centering
\[
  \xymatrix @-5.0pc @! {
      & & & \dddots \\
      & & & & 1 \\
      & & & & & t( x_{ \alpha+1 } , y_{ \alpha } ) \\
      & \dddots & & & & t( x_{ \alpha+1 } + 1, y_{ \alpha } ) \; & \; 1 \\
      & & 1 & & & \vdots & & 1 \\
      & & & 1 & & \vdots & & & 1 \\
      & & & & 1 & t( x_{ \alpha } - 1 , y_{ \alpha } ) & & & & \dddots \\
      & & & & & t( x_{ \alpha } , y_{ \alpha } ) \\
      & & & & & & 1 \\
      & & & & & & & \dddots \\
    }
\]
  \caption{The restriction of $t$ to the line segment $(
  x_{ \alpha+1 } \ldots x_{ \alpha } , y_{ \alpha } )$ extends to a
  friese $v$ on the diagonal band}
\label{fig:CC6}
\end{figure}
There is a bijective correspondence between frieses and triangulations
as explained in Remark \ref{rmk:BCI}.  In the case at hand, the
bijection says that a triangulation $\fT_P$ of $P$ corresponds to a
friese on $D$ which has the following values on the line segment
$( x_{ \alpha+1 } \ldots x_{ \alpha } , y_{ \alpha } )$.
\begin{align*}
  & \fT_P\big( ( x_{ \alpha + 1 } )^{ \circ } , ( y_{ \alpha } )_{ \circ } \big)
  \; , \;
  \fT_P\big( ( x_{ \alpha + 1 } + 1 )^{ \circ } , ( y_{ \alpha } )_{ \circ } \big)
  \; , \; 
  \ldots
  \; , \; \\[2mm]
  & \;\;\;\;\;\;\;\;\;\;\;\;\;\;\;\;\;\;\;\;\;\;\;\;\;\;\;\;\;\;\;\;\;\;\;\;\;\;\;\;\;\;\;\;\;\;\;\;\;\;\;\;\;\;\;\;\;
  \fT_P\big( ( x_{ \alpha } - 1 )^{ \circ } , ( y_{ \alpha } )_{ \circ } \big)
  \; , \;
  \fT_P\big( ( x_{ \alpha } )^{ \circ } , ( y_{ \alpha } )_{ \circ } \big)
\end{align*}
Define $\fT_P$ by requiring that its friese agrees with $v$, that is,
\begin{equation}
\label{equ:agree}
  \mbox{
  $\fT_P\big( x^{ \circ } , ( y_{ \alpha })_{ \circ } \big)
  = t( x , y_{ \alpha } ) \; $ 
  for
  $ \; x_{ \alpha+1 } \leq x \leq x_{ \alpha }$.
       }
\end{equation}

Note that carrying out this construction for each $\alpha \in \BZ$
does indeed turn $\fT$ into a maximal set of pairwise non-crossing
arcs.  Namely, the connecting arcs $\big( ( x_{ \alpha } )^{ \circ } ,
( y_{ \alpha } )_{ \circ } \big)$ divide the strip into an doubly
infinite sequence of finite polygons like $P$, and the construction
includes a triangulation of each of these into $\fT$.
\end{Construction}

The following is a more concise version of the Main Theorem from the
introduction.

\begin{Theorem}
\label{thm:main}
The maps $\Phi$ and $\Psi$ from Constructions \ref{con:Phi} and
\ref{con:Psi} are inverse bijections between the $\SL2$-tilings with
enough ones and the triangulations of the strip.
\end{Theorem}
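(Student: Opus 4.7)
The plan is to verify that $\Psi \circ \Phi = \id$ and $\Phi \circ \Psi = \id$, by showing that both $\Phi$ and $\Psi$ act locally on the finite polygons cut out of the strip by the connecting arcs (equivalently, by the zig-zag of ones), and then reducing inside each such polygon to the friese--triangulation correspondence of Remark \ref{rmk:BCI}. First I would check that Construction \ref{con:Psi} really produces a triangulation of the strip: the connecting arcs $\big( ( x_\alpha )^\circ, ( y_\alpha )_\circ \big)$ are pairwise non-crossing by Proposition \ref{pro:zigzag}(ii), the growth condition in Definition \ref{def:triangulation} follows from Proposition \ref{pro:zigzag}(iii), and the arcs inserted inside each interstitial finite polygon form a triangulation by appeal to Remark \ref{rmk:BCI}.

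For $\Psi \circ \Phi = \id$, let $\fT$ be a triangulation of the strip and set $t = \Phi( \fT )$. By the biimplication \eqref{equ:t1}, the positions at which $t$ equals $1$ are exactly the connecting arcs of $\fT$; by Proposition \ref{pro:zigzag} these positions form the zig-zag $\{ ( x_\alpha , y_\alpha ) \}$, and so the connecting arcs of $\Psi( t )$ coincide with those of $\fT$. For a finite polygon $P$ bounded by two consecutive such arcs, the triangulation $\fT_P$ inherited from $\fT$ has its attached friese taking the values $t( x , y_\alpha )$ for $x_{ \alpha + 1 } \leq x \leq x_\alpha$ along the bottom edge by Equation \eqref{equ:BCI}, while Construction \ref{con:Psi} imposes the identical equation \eqref{equ:agree} to define its triangulation of $P$. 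Since a friese is pinned down by its values on any vertical line segment across its band, the two triangulations of $P$ must coincide.

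For $\Phi \circ \Psi = \id$, let $t$ be an $\SL2$-tiling with enough ones, set $\fT = \Psi( t )$ and $t' = \Phi( \fT )$. Given $( i , j ) \in \BZ \times \BZ$, use Proposition \ref{pro:zigzag}(iii) to pick zig-zag corners $( x_{ \alpha_1 } , y_{ \alpha_1 } )$ and $( x_{ \alpha_2 } , y_{ \alpha_2 } )$ with $\alpha_1 < \alpha_2$ far enough apart that $x_{ \alpha_2 } < i < x_{ \alpha_1 }$ and $y_{ \alpha_1 } < j < y_{ \alpha_2 }$. These are connecting arcs of $\fT$, so Construction \ref{con:Phi} gives $t'_{ ij } = \fT_P( i^\circ , j_\circ )$, where $P$ is the big polygon they bound and $\fT_P$ is the restriction of $\fT$ to $P$. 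Applying Proposition \ref{pro:CC2} to the rectangle $R = ( x_{ \alpha_2 } \ldots x_{ \alpha_1 } , y_{ \alpha_1 } \ldots y_{ \alpha_2 } )$ produces a friese that agrees with $t$ throughout $R$; since Construction \ref{con:Psi} forces the friese attached to each sub-triangulation inside $P$ to agree with $t$ on a vertical segment of this band, the friese attached to $\fT_P$ as a whole must coincide with this global friese. Evaluating at the diagonal $( i^\circ , j_\circ )$ yields $\fT_P( i^\circ , j_\circ ) = t_{ ij }$, completing the identification.

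The main obstacle is the rigidity step in the second composite: one must verify that the locally defined triangulations inserted by Construction \ref{con:Psi} fit together into a single triangulation of the large polygon $P$ whose attached friese coincides globally with $t$ on $R$. This is where Proposition \ref{pro:CC2} supplies a candidate global friese, and the fact that a friese on a fixed band is determined by its restriction to any single vertical line segment (property (v) in the proof of Lemma \ref{lem:CC1}) does the essential work. Once these are combined, the friese--triangulation bijection of Remark \ref{rmk:BCI} collapses both composites to the identity.
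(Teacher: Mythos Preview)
Your treatment of $\Psi \circ \Phi = \id$ matches the paper's proof exactly: identify the connecting arcs via \eqref{equ:t1}, localise to each interstitial polygon $P$, and use that the two candidate frieses agree on the vertical segment $( x_{ \alpha+1 } \ldots x_{ \alpha } , y_{ \alpha } )$, which spans the small band edge to edge. The paper stops there, declaring the other composite ``closely related'', so for that direction you are on your own.

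For $\Phi \circ \Psi = \id$ there is a genuine gap in your rigidity step. You produce the friese $v$ from Proposition~\ref{pro:CC2}, which agrees with $t$ on the full rectangle $R$, and you want to match it with the friese $w$ attached to $\fT_P$ via Remark~\ref{rmk:BCI}. Your justification is that Construction~\ref{con:Psi} forces each sub-friese (for the small polygon $P_\alpha$) to agree with $t$ on a vertical segment. But those are vertical segments of the \emph{small} bands, each of which sits strictly inside the big band $D$: a column of $R$ has height $x_{\alpha_1} - x_{\alpha_2} + 1$, whereas a full vertical segment of $D$ has length $(x_{\alpha_1} - x_{\alpha_2}) + (y_{\alpha_2} - y_{\alpha_1}) + 1$. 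So the fact you invoke (``a friese is determined by its restriction to a single vertical line segment'') does not apply directly, and the conclusion $w = v$ is not yet earned.

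There are two honest repairs. One is to note that the segments you control, taken together, form the zig-zag path inside $R$ from $(x_{\alpha_1}, y_{\alpha_1})$ to $(x_{\alpha_2}, y_{\alpha_2})$, and that these two corners lie on the \emph{edges} of $D$; then argue (by the same $\SL2$ recursion underlying (10) in \cite{CC1}, \cite{CC2}) that a friese is determined by its values on any monotone lattice path from edge to edge, not just a vertical one. The other is to work on the triangulation side: the friese $v$ corresponds to some triangulation $\fV$ of $P$; since $v = t$ on $R$, the connecting diagonals of $\fV$ are exactly the $(x_\alpha, y_\alpha)$, so $\fV$ and $\fT_P$ have the same connecting arcs; then on each small $P_\alpha$ the gluing lemma \cite[lemmas~1 and~2(a)]{BCI} (already used in Proposition~\ref{pro:Phi}) gives $\fV_\alpha\big( x^\circ, (y_\alpha)_\circ \big) = v\big( x^\circ, (y_\alpha)_\circ \big) = t(x, y_\alpha) = \fT_{P_\alpha}\big( x^\circ, (y_\alpha)_\circ \big)$, forcing $\fV_\alpha = \fT_{P_\alpha}$ and hence $\fV = \fT_P$. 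Either route closes the gap; as written, your sketch asserts the conclusion without supplying one of them.
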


\begin{proof}
The proofs that $\Psi \circ \Phi$ and $\Phi \circ \Psi$ are the
identity are closely related, so we only show the proof for $\Psi
\circ \Phi$. 

Let $\fT$ be a triangulation of the strip and write $t = \Phi( \fT )$
and $\fU = \Psi( t )$.  We must show $\fU = \fT$.

Let $\big( ( x_{ \alpha } )^{ \circ } , ( y_{ \alpha } )_{ \circ }
\big)$ be the connecting arcs in $\fT$.  Equation \eqref{equ:t1} in
Proposition \ref{pro:Phi} says that $t_{ xy } = 1$ if and only if $(
x,y ) = ( x_{ \alpha } , y_{ \alpha } )$ for some $\alpha$.  Hence by
Proposition \ref{pro:zigzag} we can assume that the $( x_{ \alpha } ,
y_{ \alpha } )$ are defined for $\alpha \in \BZ$ and have the
properties listed in Proposition \ref{pro:zigzag}.

Construction \ref{con:Psi} implies that the connecting arcs $\big( (
x_{ \alpha } )^{ \circ } , ( y_{ \alpha } )_{ \circ } \big)$ are also
present in $\fU$.  Now consider the connecting arcs with indices
$\alpha$ and $\alpha+1$.  Suppose that $x_{ \alpha+1 } < x_{ \alpha }$
and $y_{ \alpha + 1 } = y_{ \alpha }$ as in Proposition
\ref{pro:zigzag}(ii)(a); the alternative case $x_{ \alpha+1 } = x_{
  \alpha }$ and $y_{ \alpha + 1 } > y_{ \alpha }$ is handled
analogously.  The connecting arcs in question are shown in Figure
\ref{fig:triangulation5}, and the part of the strip between them can
be considered as a finite polygon $P$ with the vertices listed in
Equation \eqref{equ:vertices}.

The arcs of $\fT$ which are inside $P$ give a triangulation $\fT_P$
of $P$.  Similarly, $\fU$ gives a triangulation $\fU_P$ of $P$.
Since the connecting arcs divide the strip into finite polygons like
$P$, it is enough to show $\fT_P = \fU_P$ to finish the proof.

Equation \eqref{equ:BCI} in Construction \ref{con:Phi} implies
\[
  \mbox{
    $t( x,y_{ \alpha } )
    = \fT_P\big( x^{ \circ } , ( y_{ \alpha } )_{ \circ } \big) \;$
    for
    $\; x_{ \alpha+1 } \leq x \leq x_{ \alpha }$.
       }
\]
On the other hand, Equation \eqref{equ:agree} in Construction
\ref{con:Psi} says
\[
  \mbox{
    $\fU_P\big( x^{ \circ } , ( y_{ \alpha })_{ \circ } \big)
    = t( x , y_{ \alpha } ) \;$
    for
    $\; x_{ \alpha+1 } \leq x \leq x_{ \alpha }$.
       }
\]
So
\[
  \mbox{
    $\fT_P\big( x^{ \circ } , ( y_{ \alpha } )_{ \circ } \big)
    = \fU_P\big( x^{ \circ } , ( y_{ \alpha } )_{ \circ } \big) \;$
    for
    $\; x_{ \alpha+1 } \leq x \leq x_{ \alpha }$.
       }
\]
That is, the frieses corresponding to $\fT_P$ and $\fU_P$ agree on the
line segment $( x_{ \alpha+1 } \ldots x_{ \alpha } , y_{ \alpha } )$.
But this segment reaches from one edge of the frieses to the other, so
the frieses are the same by (10) in \cite{CC1} and \cite{CC2}.  Hence
$\fT_P = \fU_P$ by (28) and (29) in \cite{CC1} and \cite{CC2} as
desired. 
\end{proof}

\begin{Remark}
\label{rmk:Reutenauer}
We can now explain the observation by Christophe Reutenauer reproduced
in the introduction.  Let $\fT$ be a triangulation of the strip and
set $t = \Phi( \fT )$.  Remark \ref{rmk:cd} and Proposition
\ref{pro:Ptolemy2} give $t_{ ja }c_{ j-1,j+1 } = t_{ j-1,a } + t_{
j+1,a }$, that is,
\[
  c_{ j-1,j+1 }C_j = C_{ j-1 } + C_{ j+1 }
\]
where $C_j$ denotes the $j$th column of $t$.  One can show that
\[
  c_{ j-1,j+1 } = \fT_P \big( ( j-1 )^{ \circ } , ( j+1 )^{ \circ } \big)
\]
when $P$ is a suitable finite polygon containing $\big( ( j-1 )^{
  \circ } , ( j+1 )^{ \circ } \big)$ as a diagonal, cf.\
Construction \ref{con:Phi}.  However, the right hand side of this
equation is precisely the number of triangles in $\fT_P$
incident with the vertex $j^{ \circ }$, see \cite[Introduction]{CC1}.
Hence it is the number of `triangles' of $\fT$ incident with
$j^{ \circ }$.
\end{Remark}

\appendix

\section{A link to a cluster category of Igusa and Todorov}
\label{app:IT}

Igusa and Todorov introduced a certain cluster category in
\cite[exam.\ 4.1.4(3)]{IT}, see also \cite[sec.\ 4.3]{IT}.  It will be
denoted by $\sC$ and it categorifies the strip: Its set of isomorphism
classes of indecomposable objects, $\ind \sC$, is in bijection with
the set of arcs and there are non-trivial extensions between
indecomposable objects $a$ and $b$ if and only if their arcs cross.

It is shown in \cite{HJ} that if $\fT$ is a triangulation of the
strip, then the arcs of $\fT$ correspond to a set of indecomposable
objects of $\sC$ whose additive closure is a cluster tilting
subcategory $\sT$.  By the Caldero--Chapoton formula, such a $\sT$
gives a cluster map
\[
  \rho : \obj \sC \rightarrow \BQ( x_t )_{ t \in \ind \sT }.
\]
See \cite[sec.\ 4]{BIRS} for the definition of cluster maps and
\cite{JP} for details of how the Caldero--Chapoton formula works for
cluster tilting subcategories with infinitely many isomorphism classes
of indecomposable objects.

The methods of \cite[sec.\ 6]{JP} can be adapted to show that $\rho( t
) = x_t$ for $t \in \ind \sT$, that the values of $\rho$ are Laurent
polynomials, and that each of these has positive integer coefficients
in the numerator.  Hence, setting each $x_t$ equal to $1$ turns $\rho$
into a map
\[
  \chi : \obj \sC \rightarrow \{\: 1,2,3, \ldots \:\}.
\]

One can obtain an $\SL2$-tiling from $\chi$.  Namely, given $( i,j )
\in \BZ \times \BZ$, the arc $( i^{ \circ } , j_{ \circ } )$ can be
viewed as an indecomposable object of $\sC$ and we set
\[
  t_{ ij } = \chi( i^{ \circ } , j_{ \circ } ).
\]

To see that this is an $\SL2$-tiling, note that it follows from \cite[lem.\
4.2.1]{IT} that in the triangulated category $\sC$ we have
\[
  \dim \Ext\Big( 
    \big( ( i+1 )^{ \circ } , (j+1)_{ \circ } \big) ,
    ( i^{ \circ } , j_{ \circ } )
           \Big) 
  =
  \dim \Ext\Big( 
    ( i^{ \circ } , j_{ \circ } ) ,
    \big( ( i+1 )^{ \circ } , ( j+1 )_{ \circ } \big)
           \Big)
  = 1.
\]
Moreover, \cite[prop.\ 4.2.12]{IT} gives the Auslander--Reiten
triangle 
\[
  ( i^{ \circ } , j_{ \circ } )
  \rightarrow
  \big( ( i+1 )^{ \circ } , j_{ \circ } \big)
  \oplus
  \big( i^{ \circ } , ( j+1 )_{ \circ } \big)
  \rightarrow
  \big( ( i+1 )^{ \circ } , ( j+1 )_{ \circ } \big)
\]
while there is a non-split distinguished triangle
\[
  \big( ( i+1 )^{ \circ } , ( j+1 )_{ \circ } \big)
  \rightarrow
  0
  \rightarrow
  ( i^{ \circ } , j_{ \circ } )
\]
by the formula $\Sigma \big( ( i+1 )^{ \circ } , ( j+1 )_{ \circ }
\big) = ( i^{ \circ } , j_{ \circ } )$, see \cite[sec.\ 4.2.1]{IT}.
Note that the connecting map of the second triangle is the identity
morphism of $( i^{ \circ } , j_{ \circ } )$.

By properties (M2) and (M3) of cluster maps stated in \cite[sec.\
4]{BIRS}, the last three displayed formulae imply
\begin{align*}
  \chi\big( ( i+1 )^{ \circ } , ( j+1 )_{ \circ } \big)
  \chi( i^{ \circ } , j_{ \circ } )
  & =
  \chi\Big(
        \big( (i+1)^{ \circ } , j_{ \circ } \big)
        \oplus
        \big( i^{ \circ } , ( j+1 )_{ \circ } \big)
      \Big) + \chi( 0 ) \\[2.5mm]
  & =
  \chi\big( (i+1)^{ \circ } , j_{ \circ } \big)
  \chi\big( i^{ \circ } , ( j+1 )_{ \circ } \big) + 1
\end{align*}
whence $t$ satisfies
\[
  t_{ i+1,j+1 }t_{ ij } = t_{ i+1,j }t_{ i,j+1 } + 1
\]
so $t$ is an $\SL2$-tiling.

Further use of the methods of \cite[sec.\ 6]{JP} shows that $t$ is in
fact $\Phi( \fT )$ from Construction \ref{con:Phi}.  However, it is
not clear if the categorical methods can be used to show that $\Phi$
is injective or surjective.

\medskip
\noindent
{\bf Acknowledgement.}
We are grateful to Christophe Reutenauer for the observation
reproduced in the introduction and explained further in Remark
\ref{rmk:Reutenauer}, and to David Smith for detailed comments to a
preliminary version.

Part of this work was carried out while Peter J\o rgensen was visiting
Hannover.  He thanks Thorsten Holm and the Institut f\"{u}r Algebra,
Zahlentheorie und Diskrete Mathematik at the Leibniz Universit\"{a}t
for their hospitality.  He also gratefully acknowledges financial
support from Thorsten Holm's grant HO 1880/5-1, which is part of the
research priority programme SPP 1388 {\em Darstellungstheorie} of the
Deutsche Forschungsgemeinschaft (DFG).

\end{document}